\newtheorem{theorem}{Theorem}[section]
\newtheorem{theorem-definition}[theorem]{Theorem-Definition}
\newtheorem{theorem-construction}[theorem]{Theorem-Construction}
\newtheorem{lemma-definition}[theorem]{Lemma--Definition}
\newtheorem{lemma-construction}[theorem]{Lemma--Construction}
\newtheorem{lemma}[theorem]{Lemma}
\newtheorem{proposition}[theorem]{Proposition}
\newtheorem{corollary}[theorem]{Corollary}
\newtheorem{conjecture}[theorem]{Conjecture}
\newtheorem{name}[theorem]{Definition}
\newcommand{\Z}{{\mathbb Z}}
\newcommand{\R}{{\mathbb R}}
\newcommand{\Q}{{\mathbb Q}}
\newcommand{\C}{{\mathbb C}}
\newcommand{\lms}{\longmapsto}
\newcommand{\lra}{\longrightarrow}
\newcommand{\goto}{\rightarrow}
\newcommand{\be}{\begin{equation}}
\newcommand{\ee}{\end{equation}}
\newcommand{\bt}{\begin{theorem}}
\newcommand{\et}{\end{theorem}}
\newcommand{\bd}{\begin{name}}
\newcommand{\ed}{\end{name}}
\newcommand{\bp}{\begin{proposition}}
\newcommand{\ep}{\end{proposition}}
\newcommand{\bl}{\begin{lemma}}
\newcommand{\el}{\end{lemma}}
\newcommand{\bc}{\begin{corollary}}
\newcommand{\ec}{\end{corollary}}
\newcommand{\bcon}{\begin{conjecture}}
\newcommand{\econ}{\end{conjecture}}
\newcommand{\la}{\label}
\newcommand{\mc}{\mathcal}
\newcommand{\eqarray}[3]{\begin{equation} #1=\left\{ \begin{array}{#2} #3 \end{array} \right. \end{equation}}\newcommand{\earray}[2]{\begin{equation} \quad \left\{ \begin{array}{#1} #2 \end{array} \right. \end{equation}}
\newcommand{\Earray}[2]{\begin{equation} \begin{array}{#1} #2 \end{array} \end{equation}}
\newcommand{\disp}{\displaystyle}
\newcommand{\imap}[1]{\mathbb{I}_{\mc{#1}}}
\newcommand{\pr}[1]{\text{$\text{$\bf P$}$} \rm{(\mathcal{#1})}}
\newcommand{\tropspa}[1]{\mathcal{#1}(\mathbb{A}^t)}
\newcommand{\tropsp}[1]{\mathcal{#1}(\mathbb{Z}^t)}
\newcommand{\tropspr}[1]{\mathcal{#1}(\mathbb{R}^t)}
\newcommand{\exset}[1]{\text{$\text{$\bf E$}$} \rm{(\mathcal{#1})}}
\begin{document}

\title{Stasheff polytopes and the coordinate ring of the cluster $\mathcal{X}$-variety of type $A_n$}
\author{Linhui Shen}
\date{}
\maketitle
\begin{abstract}{We define Stasheff polytopes in the spaces of tropical points of cluster $\mathcal{A}$-varieties. We study the supports of products of elements of canonical bases for cluster $\mathcal{X}$-varieties. We prove that, for the cluster $\mathcal{X}$-variety of type $A_n$, such supports are Stasheff polytopes.}
\end{abstract}
 
\numberwithin{equation}{section}

 \tableofcontents

\section{Introduction}
A cluster ensemble is a pair $({\cal X}, {\cal A})$ of positive spaces defined in~\cite{FG}. Cluster $\mc{A}$-varieties are closely related to cluster algebras introduced in \cite{FZ} -- the ring of regular functions on the space $\mc{A}$ coincides with the upper cluster algebra of \cite{BFZ}. In this paper, we focus on the cluster $\mc{X}$-variety $\mc{X}_{A_n}$ of type $A_n$, which is closely related to the moduli space $\mc{M}_{0, n+3}$.  

In detail, let $(\mc{A}_\Phi ,\mc{X}_\Phi)$ be the cluster ensemble assigned to a root system $\Phi$ of finite type. Denote by $\mc{A}_{{\Phi}^{\vee}}(\mathbb{Z}^t)$ the space of $\mathbb{Z}$-\emph{tropical~points} of the Langlands dual cluster $\mc{A}$-variety $\mc{A}_{\Phi^{\vee}}$. According to the Duality Conjectures from \cite{FG}, one should have an isomorphism 
\begin{equation}
\imap{A}:  \quad {\mc{A}}_{\Phi^{\vee}} (\mathbb{Z}^t) \longrightarrow {\bf E}({\mc{X}_{\Phi}})
\end{equation}
where ${\bf E}({\mc X}_{\Phi})$ is the set of 
\emph{indecomposable~functions} on ${\mc X}_{\Phi}$, providing a canonical basis of
the ring of regular functions on ${\mc X}_{\Phi}$. In the case of type $A_n$, such a map $\imap{A}$ has been constructed in \cite{FG3}.\footnote{In the case of type $A_n$, the Langlands dual $\mc{A}_{{A_n}^\vee}$ coincides with $\mc{A}_{A_n}.$} Denote by ${\bf P}(\mc{X}_{A_n})$ the set of finite products of elements of ${\bf E}(\mc{X}_{A_n})$. The main goal of this paper is to study ${\bf P}(\mc{X}_{A_n})$. 

The basis ${\bf E}(\mc{X}_{A_n})$ has remarkable combinatorial properties. One of them is that, given tropical points $l_1, ..., l_m \in {\mc A}_{A_n}(\mathbb{Z}^t)$, their corresponding product in $ {\bf P}(\mc{X}_{A_n}) $ can be uniquely decomposed into a finite sum of elements of ${\bf E}(\mc{X}_{A_n})$ with {\it non-negative} coefficients:
\begin{equation}
\disp \prod_{i=1}^m \imap{A}(l_i) = 
\sum_{l\in {\mc{A}}_{A_n}(\mathbb{Z}^t)}c(l_1, ..., l_m; l)\imap{A}(l).
\label{imap}
\end{equation}
The \emph{support~of~the~product} is the set of $l \in {\mc{A}}_{A_n}(\mathbb{Z}^t)$ 
such that  $c(l_1, ..., l_m; l) \not = 0$. The Convexity Conjecture from \cite{FG} suggests that the support is \emph{universally~convex} in $\mc{A}_{A_n}(\mathbb{Z}^t)$. We prove that it is not only convex but also a Stasheff polytope.

\vskip 2mm

The Stasheff polytope is a remarkable convex polytope first described combinatorially by J. Stasheff in 1963.
  In Section~\ref{sec: 2}, we define Stasheff polytopes in \emph{tropical~positive~spaces}. The main idea of the definition is given below.
 
Denote by $\mc{A}(\mathbb{R}^t)$ the space of $\mathbb{R}$-tropical points of a positive space $\mc{A}$. It has a piecewise-linear structure, isomorphic to $\mathbb{R}^{\dim\mc{A}}$ in many different ways.  We define a \emph{convex~polytope} in $\tropspr{A}$ as the intersection of ``half-spaces" given by inequalities of \emph{tropical~indecomposable~functions} (see Section~\ref{sec: 1.1}). The tropical indecomposable functions are usually not linear in all coordinate systems, but are convex and piecewise linear. 
 
Let $\Phi_{\geq -1}$ be the set of positive and simple negative roots  of a root system $\Phi$ of finite type. In \cite{CFZ}, polytopal realizations of Stasheff polytopes (or generalized associahedra) in $\mathbb{R}^n$ were constructed by a set of linear inequalities indexed by $\Phi_{\geq-1}$.  For the cluster $\mc{A}$-variety $\mc{A}_{\Phi}$, there are  cluster variables $A_\alpha$ indexed by $\Phi_{\geq -1}$ ([FZ1, Theorem 5.7]). When $\Phi$ is of classical Cartan-Killing type, the cluster variables are indecomposable functions ([\emph{loc.cit.}, Theorem 4.27]). Their \emph{tropicalizations} $A^t_\alpha$ are functions on the space $\mc{A}_\Phi(\mathbb{Z}^t)$. Our Stasheff polytopes in $\mc{A}_{\Phi}(\mathbb{Z}^t)$ are defined via tropical cluster variables (Definition \ref{eq32}). One of our main results is as follows.

\bt \la{mthm}
The support of the product \eqref{imap} is the Minkowski sum of 
the points $l_1, ..., l_m$: 
\begin{equation}
S(l_1, ..., l_m) = 
\{x \in {\mc{A}}_{A_n}(\mathbb{Z}^t)~|~A^t_\alpha(x) \leq \sum_{i=1}^m A^t_\alpha(l_i) ~~\text{for any $\alpha \in \Phi_{\geq -1}$}\}.
\end{equation}
It is a Stasheff polytope in the space 
${\mc A}_{A_n}(\mathbb{Z}^t)$. 
\et

We conjecture the same result for any root system $\Phi$ of finite type. 

In $\mathbb{R}^n$ the Minkowski sum of finitely many points is the sum of these points as vectors. It is still a point.

In our case, ${\cal A}_{A_n}$ has many coordinate systems which are called \emph{clusters}. Each cluster provides a way of taking the sum of the tropical points as vectors. We thus get a set of vertices parametrized by the set of clusters. The Minkowski sum turns out to be the convex hull of these vertices. It is a Stasheff polytope. 

An example of Stasheff polytopes in $\mc{A}_{A_2}$ is shown on Fig.~\ref{fig1}-\ref{fig5}. These figures show the same polytope in 5 different coordinate systems (clusters). Notice that it is a ``tropical" pentagon, but looks like a heptagon from the usual point of view: the reason is that some of its sides are given by piecewise linear functions, and thus they are not line segments. But they become line segments in other coordinate systems. In general, the Minkowski sum gives rise to a new family of convex polytopes in tropical positive spaces (Definition \ref{eq88}).

\vskip 2mm

Section \ref{sec: 3} focuses on the cluster $\mc{A}$-variety $ \widetilde{\cal A}_{A_n}$ . It is a cluster ${\cal A}$-variety with coefficients (see Definition \ref{eq35}). Theorems \ref{eq42}, \ref{eq55} provide two criteria for recognizing Stasheff polytopes. Theorem \ref{6.3.12.44} is a corollary of Theorem \ref{eq42}. The variety $\widetilde{\cal A}_{A_n}$ also has a canonical basis ${\bf E}(\widetilde{\cal A}_{A_n})$~(\cite{FZ1}, \cite{SZ},\cite{C}). Similarly, we define the set ${\bf P}(\widetilde{\mc{A}}_{A_n})$ of finite products of elements of  ${\bf E}(\widetilde{\mc{A}}_{A_n})$. It has a natural partial order structure. Theorem \ref{thm2.4} determines the partial order on  ${\bf P}(\widetilde{\mc{A}}_{A_n})$. Theorem~\ref{eq61} is the main technical tool for proving Theorem \ref{mthm}.

\vskip 2mm

Section \ref{sec: 4} focuses on the cluster $\mc{X}$-variety $\mc{X}_{A_n}$. There is a surjective map $k:\widetilde{\cal A}_{A_n}\goto {\cal X}_{A_n}$. The induced map $k^*$ on their coordinate rings takes ${\bf E}(\mc{X}_{A_n})$ (respectively ${\bf P}(\mc{X}_{A_n})$) into ${\bf E}(\widetilde{\mc{A}}_{A_n})$ (respectively ${\bf P}(\widetilde{\mc{A}}_{A_n})$). By using Theorem \ref{eq61} and the map $k$, we prove the first part of Theorem \ref{mthm}. The second part is a direct consequence of Theorem \ref{6.3.12.44}. Theorem \ref{thm: 4.2} provides a bijection between ${\bf P}_{*}(\mc{X}_{A_n})$ and the set of Stasheff polytopes in $\mc{A}_{A_n}(\mathbb{Z}^t)$. Conjecture \ref{eq91} is a generalization of Theorem \ref{6.3.12.44} to all cluster $\mc{X}$-varieties.
  
 \vskip 3mm
  
 {\bf Acknowledgements}. I wish to thank K. Peng, R. Raj for helpful conversations. I am especially grateful to my advisor A. Goncharov for suggesting the problem and for his enlightening suggestions and encouragement. In particular, the main definitions of this paper concerning convex polytopes in tropical positive spaces follow the idea in [FG3, Section 2.4]. Finally, I thank the referee for very careful reading of this paper and for many useful suggestions.

\subsection{Tropical positive spaces and convex subsets}
\label{sec: 1.1}
For the convenience of the reader, let us briefly recall some basic definitions from \cite{FG}.

A \emph{positive~space} is a variety $\mc{A}$ equipped with a positive atlas $C_{\mc{A}}$. Namely, the transition maps between coordinate systems in $C_{\mc{A}}$ are given by rational functions presented as a ratio of two polynomials with positive integral coefficients. We denote such a space by ($\mc{A}$, $C_{\mc{A}}$). 

A \emph{universally positive Laurent polynomial} on $\mc{A}$ is a regular function on $\mc{A}$ which is a Laurent polynomial with non-negative integral coefficients in every coordinate system in $C_{\mc{A}}$. Denote by $\mathbb{L}_{+}(\mc{A})$ the set of universally positive Laurent polynomials.  Given $F_1, F_2\in \mathbb{L}_{+}(\mc{A})$, if $F_1-F_2\in \mathbb{L}_{+}(\mc{A})$, then we say $F_1\geq F_2$. We say that $F\in \mathbb{L}_{+}(\mc{A})$ is \emph{indecomposable} if it cannot be decomposed into a sum of two nonzero universally positive Laurent polynomials. Denote by $\exset{A}$ the set of indecomposable functions. Let $\pr{A}$ be the set of finite products of indecomposable functions. Clearly, $\pr{A}$ is a subset of  $\mathbb{L}_{+}(\mc{A})$, and is a semigroup under multiplication.

\vskip 3mm

A \emph{semifield} is a set \emph{P} equipped with operations of addition and multiplication, so that addition is commutative and associative, multiplication makes \emph{P} into an abelian group, and they are compatible in a natural way: $(a+b)c=ac+bc$ for $a,b,c \in P$.  For any positive space $\mc{A}$, the transition maps are subtraction free. Thus one can consider the set ${\cal A}(P)$ of $P$-points of $\mc{A}$. The transition maps are bijective on ${\cal A}(P)$ because they are well defined on every \emph{P}-point. Therefore ${\cal A}(P)\simeq P^n$. For example, the set ${\R_{>0}}$ of positive real numbers with usual operations is a semifield. The set ${\cal A}(\R_{>0})\simeq \R_{>0}^n$ of positive points of ${\cal A}$ is well defined.

The tropical semifield $\mathbb{R}^t$ is a set of real numbers $\mathbb{R}$ but with the multiplication $\cdot_t$ and addition $+_t$ given by
  \be \la{eq2}
   a\cdot_t b:=a+b, \quad a+_t b := \max\{a,b\}.
  \ee
The semifields $\mathbb{Z}^t$, $\mathbb{Q}^t$ are defined in the same way. Let $\mc{A}(\mathbb{A}^{t})$ be the set of $\mathbb{A}^t$-points, here $\mathbb{A}$ can be $\mathbb{Z}$, $\mathbb{Q}$, $\mathbb{R}$. One can tropicalize $F\in\mathbb{L}_{+}(\mc{A})$ via evaluating it on $\mc{A}(\mathbb{A}^t)$. It is easy to see that the tropicalization $F^t$ is a convex piecewise linear function in each positive coordinate system. 

{\bf Example.} Let $F=2x_1^3x_2+x_1^{-1}+1=x_1^3x_2+x_1^3x_2+x_1^{-1}+1$. When taking the maximum, the coefficients of monomials in \emph{F} do not matter. One can drop them first. Hence $F^t=\max\{3x_1+x_2, -x_1,0\}.$

\vskip 3mm

Given a subset \emph{S} of $\mc{A}(\mathbb{A}^t)$, define
\be \la{eq4}
c_{F,S}=\sup_{x\in S}F^t(x), \quad c_{F,S} \in \mathbb{A}\bigcup\{+\infty\}.
\ee
Following \cite{FG3}, the convex hull of $S$ and the Minkowski sum of two convex subsets are as follows.
\bd \la{eq5}
The convex hull of a subset {S} of $\mc{A}(\mathbb{A}^t)$ is
\be \la{eq6}
C(S)=\{x\in\mc{A}(\mathbb{A}^t)~|~  F^t(x)\leq c_{F,S} \text{ for all } F\in \exset{A}\}.
\ee
Clearly $S \subseteq C(S)$. We say {S} is convex if $S=C(S)$.
\ed
\bd \la{eq7}
The Minkowski sum of two convex subsets $S_1$, $S_2$ of $\tropspa{A}$ is
\be \la{eq8}
S_1+S_2=\{x\in \tropspa{A}~|~ F^t(x)\leq c_{F,S_1}+c_{F,S_2} \text{ for all }F\in \exset{A}\}.
\ee
\ed
${\bf Remark.}$ Clearly $S_1+S_2$ is also convex, but the number
\be \la{eq9}
c_{F, S_1+S_2}=\sup_{x\in S_1+S_2}F^t(x)
\ee 
is not necessarily equal to $c_{F, S_1}+c_{F,S_2}$. Furthermore, the Minkowski sum may not be associative. This is because in our definition, the defining functions may not be linear. For example, let $L_1=x$, $L_2=y$, $L_3=\max\{x+y,x\}$ be functions on $\mathbb{R}^2$. Given three sets
$$S_1=\{L_1\leq 0, L_2 \leq 1, L_3\leq 1 \},\quad S_2=\{L_1\leq 1, L_2 \leq -1, L_3\leq 1\},$$$$S_3=\{L_1\leq 0, L_2\leq 1, L_3 \leq 0\}.$$
Consider $S_1+S_2=\{L_1\leq1, L_2\leq 0, L_3 \leq 2\},$ then $c_{L_3, S_1+S_2}=1<c_{L_3, S_1}+c_{L_3, S_2}$  and $(S_1+S_2)+S_3\neq S_1+(S_2+S_3).$

\vskip 3mm

In the cases of cluster $\mc{A}$-varieties of classical Cartan-Killing type (see \cite{FZ1}, Section 4), the Minkowski sum behaves well in the following sense.
\bp \la{eq10}
Given a cluster $\mc{A}$-variety $\mc{A}$ of classical type, let $S_1$, $S_2$, $S_3$ be convex subsets of $\mc{A}(\mathbb{A}^t)$. For any $F\in\exset{A}$, we have
\be \la{eq11}
c_{F,S_1+S_2}=c_{F,S_1}+c_{F,S_2}.
\ee 
The associativity holds:
\be \la{eq12}
(S_1+S_2)+S_3=S_1+(S_2+S_3).
\ee
\ep
\begin{proof}
For any $F\in{\bf E}({\cal A})$, it becomes a monomial in a certain coordinate system ([\emph{loc.cit.}, Theorem 4.27], [C, Theorem 1.1]). Its tropicalization $F^t$ is linear in this coordinate system. Let $x_1\in S_1, x_2\in S_2$ be such that $F^t(x_1)=c_{F,S_1}$, $F^t(x_2)=c_{F,S_2}$. Let $x_1+x_2$ be the usual sum of two vectors in the same coordinate system. Then $F^t(x_1+x_2)=c_{F,S_1}+c_{F,S_2}$. On the other hand, for any $G\in \exset{A}$, the convexity of $G^t$ implies that
$G^t(x_1+x_2)\leq G^t(x_1)+G^t(x_2)\leq c_{G,S_1}+c_{G,S_2}$.
Therefore, by definition, $x_1+x_2 \in S_1+S_2$. The number $c_{F,S_1+S_2}\geq F^t(x_1+x_2)=c_{F,S_1}+c_{F,S_2}$. The other direction that $c_{F, S_1+S_2}\leq c_{F,S_1}+c_{F,S_2}$ follows from (\ref{eq8}), (\ref{eq9}). The first part is proved. The associativity follows directly from the first part.
\end{proof}
\begin{conjecture} \la{eq14}
The formula \eqref{eq11} holds for all cluster $\mc{A}$-varieties.
\end{conjecture}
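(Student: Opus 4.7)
The plan is to mimic the proof of Proposition \ref{eq10}, where the direction $c_{F,S_1+S_2} \leq c_{F,S_1}+c_{F,S_2}$ is immediate from the definition \eqref{eq8}, so the task reduces to exhibiting, for each $F \in \exset{A}$, a witness point $x^* \in S_1+S_2$ with $F^t(x^*) = c_{F,S_1}+c_{F,S_2}$. Choose maximizers $x_i \in S_i$ with $F^t(x_i) = c_{F,S_i}$; one would like to take $x^* = x_1 + x_2$ as vector sum in a coordinate system that makes $F^t$ linear at both $x_1$ and $x_2$ simultaneously. Once such a coordinate system is found, the inclusion $x^* \in S_1+S_2$ is automatic from the convexity and positive homogeneity of $G^t$ for every $G \in \exset{A}$, which together imply the subadditivity $G^t(x_1+x_2) \leq G^t(x_1)+G^t(x_2) \leq c_{G,S_1}+c_{G,S_2}$, exactly as in the classical proof.

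The essential new ingredient one needs, beyond the classical case, is the following \emph{common linearization property}: for every $F \in \exset{A}$ and every pair of points $x_1, x_2 \in \mc{A}(\mathbb{R}^t)$ there is a positive coordinate system (cluster) in which $F^t$ is linear on a cone containing both $x_1$ and $x_2$. In the classical Cartan--Killing case this is trivial because $F$ is globally a cluster monomial, so $F^t$ is linear on \emph{all} of $\mc{A}(\mathbb{R}^t)$ in one cluster. For general cluster $\mc{A}$-varieties one instead has the finer decomposition of $\mc{A}(\mathbb{R}^t)$ into $g$-vector chambers, on each of which every cluster monomial is linear; but an arbitrary indecomposable $F$ is only piecewise linear, with its domain of linearity refining the $g$-vector fan. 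To finesse this, one exploits the freedom in choosing the maximizers: the sets $\{x \in S_i : F^t(x) = c_{F,S_i}\}$ are faces of $S_i$, and one would argue that these faces can always be met inside a single closed cluster chamber simultaneously for $x_1$ and $x_2$, possibly after a mutation.

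The main obstacle is precisely this common-chamber step, and it is also the reason the statement remains conjectural. In non-finite type the canonical basis contains elements (such as the ``imaginary'' basis elements coming from positive roots of infinite multiplicity, or the theta functions of Gross--Hacking--Keel--Kontsevich) whose tropicalizations have nontrivial bending loci that are not controlled by any single seed. A proof via the GHKK scattering-diagram/theta-basis framework looks natural: one would identify $F^t$ with the tropical degree function of a theta function, interpret its domains of linearity as scattering chambers, and use the action of the cluster modular groupoid to transport $x_1$ and $x_2$ into a single chamber. An alternative, more combinatorial route is to attempt Theorem \ref{eq55} style arguments cluster-by-cluster and patch.

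What I would try first, as a sanity check, is the rank-2 affine cases (types $\widetilde{A}_1$ and the Kronecker quiver), where $\exset{A}$ is explicit: the imaginary $F$'s are $(xy)^n$ or universal monomials whose tropicalizations are manifestly linear in every cluster, so Step 2 succeeds trivially for them, and one can verify Conjecture \ref{eq14} by hand. If the conjecture survives that test, the next test is a rank-3 wild example where $F^t$ is genuinely piecewise linear. I expect any successful argument to involve, at its core, a statement to the effect that the $\exset{A}$-piecewise-linear fan is always a \emph{coarsening} of a common refinement of $g$-vector fans, so that points of $\mc{A}(\mathbb{R}^t)$ can be moved into a joint chamber without changing any $F^t$-value.
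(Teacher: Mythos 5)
The first thing to flag is that the paper contains no proof of this statement: Conjecture \ref{eq14} is left open, and the only case established is Proposition \ref{eq10}, for cluster $\mc{A}$-varieties of classical Cartan--Killing type, where every $F\in\exset{A}$ is a cluster monomial and hence $F^t$ is globally linear in some chart. Your proposal correctly isolates why that argument does not extend: the inequality $c_{F,S_1+S_2}\le c_{F,S_1}+c_{F,S_2}$ is immediate from \eqref{eq8}, and the whole content is producing a witness $x^*$ with $F^t(x^*)=c_{F,S_1}+c_{F,S_2}$ that still lies in $S_1+S_2$; subadditivity of every $G^t$ (convexity plus positive homogeneity of a maximum of linear forms) handles membership once $x^*$ is taken as a vector sum in a chart where $F^t$ is linear on a convex cone containing both maximizers. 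But the ``common linearization property'' on which everything hinges is nowhere established, and you say so yourself: as written, your text is a research program rather than a proof, and the step you defer is essentially the entire difficulty of the conjecture. (A secondary point, shared with the paper's own proof of Proposition \ref{eq10}: for unbounded convex $S_i$ the suprema $c_{F,S_i}$ need not be attained, so the maximizers $x_i$ may not exist; any complete argument must address this.)

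There is also a concrete error in your closing ``sanity check.'' In the rank-$2$ affine case the tropicalizations of the imaginary indecomposables are \emph{not} linear in any cluster: for the Kronecker quiver the element $z_1=(A_1^2+A_2^2+1)/(A_1A_2)$ of \cite{SZ} tropicalizes to $\max\{a_1-a_2,\,a_2-a_1,\,-a_1-a_2\}$ in the initial chart, which is genuinely piecewise linear, and by the symmetry of the exchange pattern it remains non-linear in every cluster. So this case does not ``succeed trivially''; it is precisely the first place where your common-chamber step must be confronted. The paper's Remark in Section \ref{sec: 4.3} already signals that this regime is delicate (single-point sets attached to imaginary roots fail to be convex there), which suggests that a proof of \eqref{eq11} beyond finite type cannot reduce to the cluster-monomial linearity exploited in Proposition \ref{eq10} and will need genuinely new input of the scattering-diagram type you allude to.
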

For cases when \eqref{eq11} holds, given finitely many subsets $S_1, ..., S_m$ of $\tropspa{A}$, their Minkowski sum is
\be \la{eq15}
\sum_{i=1}^mS_i=\{x\in \tropspa{A}~|~ F^t(x)\leq \sum_{i=1}^mc_{F,S_i} \text{ for all } F\in \exset{A} \}.
\ee

\subsection{Supports of products of elements of a canonical basis}
\la{sec: 1.2}
Let $(\mc{X}, \mc{A})$ be a cluster ensemble. We briefly recall its definition in Section \ref{sec: 4.1}. Let $({\cal X}^{\vee}, {\cal A}^{\vee})$ be the pair of their \emph{Langlands~dual~spaces} (\cite{FG}).
Fock and Goncharov's Duality Conjecture asserts that the set of $\mathbb{Z}^t$-points of $\mc{A}^{\vee}$-(or $\mc{X}^{\vee}$-) space parametrizes a canonical basis of the coordinate ring of  ${\cal X}$-(or ${\cal A}$-) space. For our purposes, we need only one direction of this conjecture.
\bcon [{[\emph{loc.cit}]}, Section 4] There is a canonical isomorphism
\be \la{eq18}
{\Bbb I}_{\cal A}: {\cal A}^{\vee}(\Z^t)\stackrel{\sim}{\lra} {\bf E}({\cal X}).
\ee
The set ${\bf E}({\cal X})$ provides a ${\Z}$-basis of the coordinate ring of ${\cal X}$. 
\econ

For  the cluster $\mc{X}$-variety $\mc{X}_{A_n}$, the Duality Conjecture is proved in \cite{FG3}. We sketch the proof in Section \ref{sec: 4}. In this case, ${\cal A}_{A_n}^{\vee}$ coincides with ${\cal A}_{A_n}$. Let \emph{f} be a regular function on ${\cal X}_{A_n}$. It can be uniquely decomposed as a finite sum
\be \la{eq19}
f=\sum_{l\in {\cal A}_{A_n}(\Z^t)}c(f;l)\imap{A}(l).
\ee
The numbers $c(f;*)$ are called the structure coefficients of \emph{f}. Thanks to the following Lemma, we have $c(f;*)\in \mathbb{Z}_{\geq 0}$ for each $f\in {\Bbb L}_{+}({\cal X}_{A_n})$.

\bl If the set ${\bf E}({\cal X})$ of indecomposable functions is a $\Z$-basis of the coordinate ring of ${\cal X}$, then it provides a $\Z_{\geq 0}$-basis of ${\Bbb L}_+({\cal X})$.
\el
\begin{proof}
Let $\alpha=\{X_i\}$ be a local coordinate system in $C_{\cal X}$. By definition, each $f\in {\Bbb L}_+({\cal X})$ can be uniquely expressed as
$$
f=\sum_{a=(a_1,\ldots, a_n)\in \Z^n} c_{a} X_1^{a_1}\ldots X_n^{a_n}, ~~~c_a\in \Z_{\geq 0}.
$$
Define $l_{\alpha}(f):=f(1,\ldots,1)=\sum_{a\in \Z^n} c_a$. Clearly here if $f\neq 0$, then $l_{\alpha}(f)\geq 1$.

Let $f \in {\Bbb L}_{+}({\cal X})-\{0\}$. Let $f=\sum_{k=1}^m g_k$ be a decomposition of $f$ such that $g_1,\ldots, g_m \in {\Bbb L}_+({\cal X})-\{0\}$. Then $l_{\alpha}(f)=\sum_{k=1}^m l_{\alpha}(g_k)\geq m$. Therefore $m$ is bounded. Let $N$ be the maximal number such that
\be \la{last}
f=g_1+\ldots +g_N, ~~~~ g_1,\ldots, g_N \in {\Bbb L}_+({\cal X})-\{0\}.
\ee
Here $g_1,\ldots, g_N \in {\bf E}({\cal X})$. Otherwise, we may assume that $g_N \notin {\bf E}({\cal X})$. By definition, we have $g_N= g_N'+g_{N+1}'$, where $g_N', g_{N+1}'\in {\Bbb L}_+({\cal X})-\{0\}$.
Therefore we get a new decomposition $f=g_1+\ldots +g_N'+g_{N+1}'$, which contradicts the assumption that $N$ is the maximal number.

Let us combine the same terms appearing in the right hand side of (\ref{last}). It gives us a decomposition
\be \la{9.9.1.1}
f=\sum_{f_i\in {\bf E}({\cal X})} c_i f_i, ~~~~c_i\in \Z_{\geq 0}. 
\ee
 If ${\bf E}({\cal X})$ is a $\Z$-basis of the coordinate ring, then the decomposition (\ref{9.9.1.1}) is unique. Conversely, if $f$ can be decomposed as in (\ref{9.9.1.1}), then $f\in {\Bbb L}_+({\cal X})$. The Lemma is proved.
\end{proof}

\bd \la{eq20}
The support of $f$ is a finite set 
\be \la{eq22} 
S_f:=\{l\in {\cal A}_{A_n}(\Z^t)~|~ c(f;l)\neq 0\}.
\ee
\ed

{\bf Example.} We consider the case of type $A_2$. Let
\be
l_1=(-1,0),~l_2=(0,1),~l_3=(1,1),~l_4=(1,0),~l_5=(0,-1).
\ee
In a certain coordinate system, the set
\be
{\cal A}_{A_2}(\Z^t)\simeq \Z^2=\{bl_i+cl_{i+1}~|~b,c\in \Z_{\geq 0},~i\in \Z/5\}.
\ee
We have
\begin{align}
{\Bbb I}_{\cal A}(l_1)=X_1^{-1},~~~{\Bbb I}_{\cal A}(l_2)=X_2,~~~{\Bbb I}_{\cal A}(l_3)=X_1X_2+X_2,\nonumber\\
{\Bbb I}_{\cal A}(l_4)=X_1+X_1X_2^{-1}+X_2^{-1},~~~{\Bbb I}_{\cal A}(l_5)=X_2^{-1}+X_2^{-1}X_1^{-1}.
\end{align}
In general
\be
{\Bbb I}_{\cal A}(bl_i+cl_{i+1})=\big({\Bbb I}_{\cal A}(l_i)\big)^{b}\cdot\big({\Bbb I}_{\cal A}(l_{i+1})\big)^c.
\ee 
Every $f\in {\bf P}({\cal X}_{A_2})$ can be expressed as
\be
f=\prod_{i=1}^5\big({\Bbb I}_{\cal A}(l_i)\big)^{d_i}=\sum_{l\in {\cal A}_{A_2}(\Z^t)}c(f;l){\Bbb I}_{\cal A}(l).
\ee
Here $d_1,\ldots,d_5\in \Z_{\geq 0}$. The coefficients are
\be
c(f;bl_i+cl_{i+1})=\sum_k{d_{i+3}\choose k}{d_{i+4}+k \choose d_{i+2}+d_{i+3}-d_i+b}{d_{i+2}\choose d_{i+4}-d_{i+1}+c+k}. 
\ee
For example, if $d_1=10, ~d_2=30,~d_3=10,~d_4=20,~d_5=30$, the support $S_f$ is shown in Fig.\ref{fig1}.

\subsection{Stasheff polytopes}
\label{sec: 2} \label{sec: 2.1}\label{sec: 2.2}
 We define Stasheff polytopes in ${\cal A}_{\Phi}({\Bbb A}^t)$ by using tropical cluster variables. We borrow the following notations from \cite{FZ2}.
 
Recall the set $\Phi_{\geq -1}$ of almost positive roots (i.e. positive and simple negative roots). There is a \emph{compatibility~degree~map} ([$loc. cit$]):
\be \la{eqq}
\Phi_{\geq -1} \times \Phi_{\geq -1}\lra \mathbb{Z}_{\geq 0},~~~
(\alpha, \beta)\lms(\alpha||\beta).
\ee
In particular, $(\alpha||\beta)=0$ implies $(\beta||\alpha)=0$. In this case, $\alpha$ and $\beta$ are called $compatible$. A $compatible$ $set$ $T$ is a subset of $\Phi_{\geq -1}$ whose elements are mutually compatible. We set
\be \la{eq27}
{\rm S}(T)=\{\alpha \in \Phi_{\geq -1}~|~ \alpha \notin T \text{ and }T\cup \{\alpha\} \text{ is still a compatible set}\}
\ee
and call ${\rm S}(T)$ the \emph{supplement} of $T$. A compatible set is called the \emph{cluster} associated to $\Phi$ if its supplement is empty. From [$loc.cit$, Theorem 1.8], each cluster is a $\mathbb{Z}$-basis of the root lattice. 
 \vskip 3mm

Recall the cluster ${\cal A}$-variety ${\cal A}_{\Phi}$ of classical type. Its cluster variables $A_{\alpha}$ are indecomposable functions indexed by ${\Phi_{\geq -1}}$ ([FZ1, Theorem 4.27, 5.7]). Let $c=\{c_{\alpha}\}$ be a set of real numbers.
For each compatible set \emph{T}, define the $T$-face
\be \la{eq28}
\mc{F}_c^T=\{x\in \tropspr{A}~|~ A_{\alpha}^t(x)=c_{\alpha}\text{ for all } {\alpha}\in T, \text{ and } A_{\alpha}^t(x)\leq c_{\alpha} \text{ for all } \alpha\in {\rm S}(T)\}.
\ee 
If $T$ is empty, then by definition
\begin{equation}
\mc{F}_c^\emptyset=\{x\in \tropspr{A}~|~ A_{\alpha}^t(x)\leq c_{\alpha} \text{ for all } \alpha \in \Phi_{\geq-1}\}.
\label{sta}
\end{equation}
\bd \la{eq32}
The polytope $\mc{F}_{c}^{\emptyset}$ defined in \eqref{sta} is called a Stasheff polytope {\rm (}or a $generalized$ $associahedron${\rm )} if
\be \la{eq31} 
 T_1\subseteq T_2 \Longrightarrow\mc{F}_c^{T_2}\subseteq \mc{F}_c^{T_1},~~\forall ~T_1, T_2.
\ee
\ed

{\bf Remark.} 
 For simplicity, we will only consider the cases when the seed is reduced (see Section \ref{sec: 4.1}). For each $T$, there is a coordinate system in which the tropical cluster variables of \emph{T} are simultaneously linear. Then ${\cal F}_c^T$ becomes a convex polytope and thus homeomophic to a unit ball $D^k$. Clearly its dimension $k\leq n-\#T$, where \emph{n} is the rank of $\Phi$.
We say ${\cal F}_c^\emptyset$ is non-degenerate if $k=n-\#T$ for each \emph{T}. 
  
  The set $\mc{A}_{\Phi}(\mathbb{A}^t)$ is a subset of $\mc{A}_{\Phi}(\mathbb{R}^t)$. With an abuse of notation, the intersection $\mc{A}_{\Phi}(\mathbb{A}^t)\bigcap {\cal F}_{c}^{\emptyset}$ is called a Stasheff polytope in $\mc{A}_{\Phi}(\mathbb{A}^t)$.
  
  \vskip 3mm
  
 {\bf Examples}. We consider the case of type $A_2$. Then $\Phi_{\geq -1}=\{-\alpha, -\beta, \alpha,\beta, \alpha+\beta\}$. The cluster variables are
 \begin{equation}
 \begin{array}{cc}
 A_{-\alpha}=A_1, \quad A_{-\beta}=A_2, \quad A_{\alpha}=A_1^{-1}+A_1^{-1}A_2, \\ 
 A_{\beta}=A_2^{-1}+A_2^{-1}A_1,\quad A_{\alpha+\beta}=A_1^{-1}A_2^{-1}+A_1^{-1}+A_2^{-1}.
 \end{array}
 \end{equation} 
 Their tropicalization and a set $c=\{c_i\}$ of real numbers are given as follows
\begin{equation} \quad \left\{ \begin{array}{llllll}  
A_{-\alpha}^t&=&a_1, &\quad c_{-\alpha}&=&20;\\
A_{-\beta}^t&=&a_2,  &\quad c_{-\beta}&=&10;\\
A_{\alpha}^t&=&\max\{-a_1, -a_1+a_2\}, &\quad c_{\alpha}&=&20;\\
A_{\beta}^t&=&\max\{-a_2, a_1-a_2\}, &\quad c_{\beta}&=&20;\\
A_{\alpha+\beta}^t&=&\max\{-a_1-a_2, -a_1, -a_2\}, &\quad c_{\alpha+\beta}&=&30.
\end{array} \right. \end{equation} 
Fig.\ref{fig1} shows the Stasheff polytope ${\cal F}_{c}^{\emptyset}$ in $\mc{A}_{A_2}(\mathbb{R}^t)$. Fig.\ref{fig2}-\ref{fig5} show the same Stasheff polytope in $\mc{A}_{A_2}(\mathbb{R}^t)$ in the other four coordinate systems.

\begin{figure}[ht]
\epsfxsize250pt
\centerline{\epsfbox{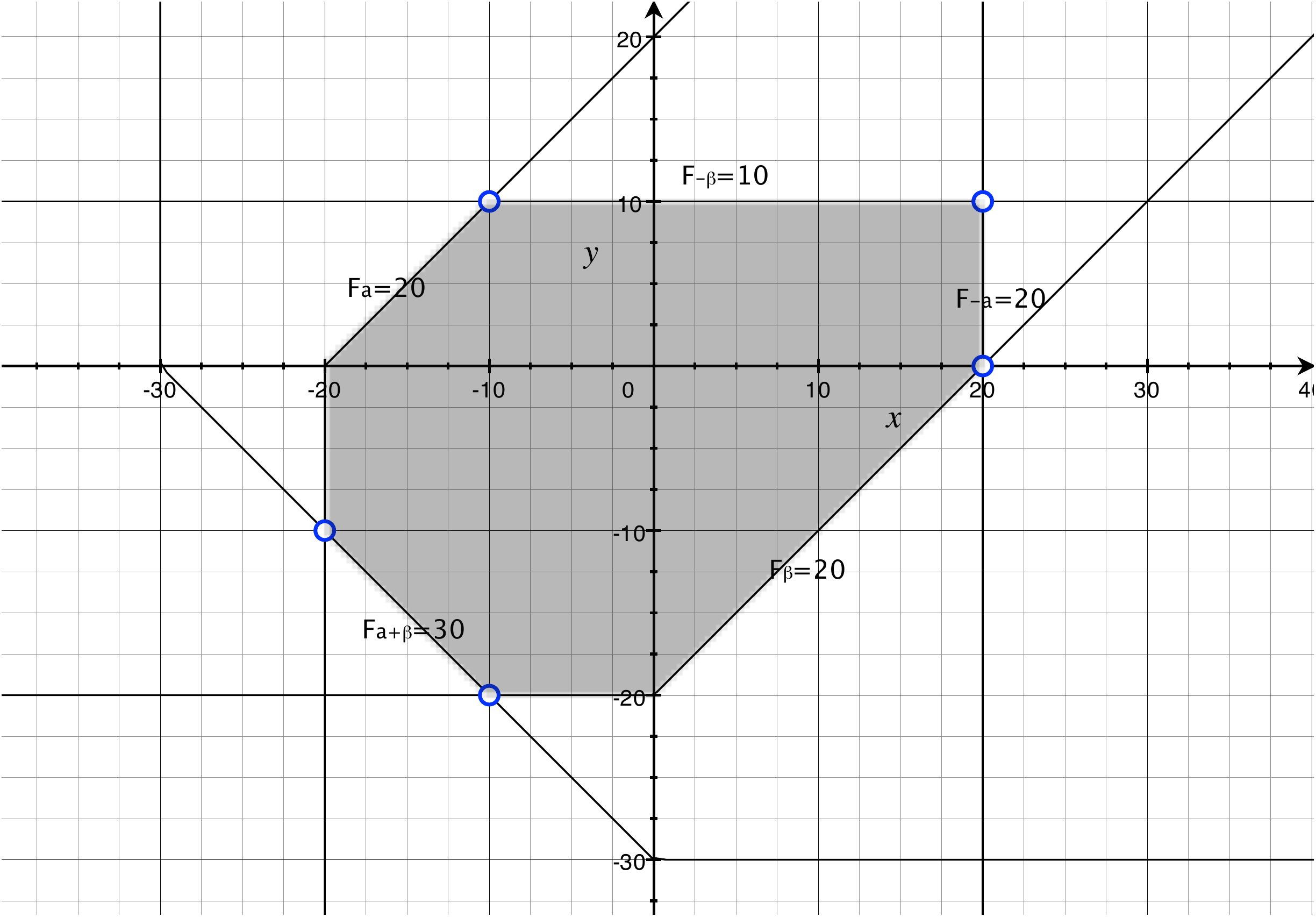}}
\caption{A Stasheff polytope in the tropical cluster $\mc{A}$-variety of type $A_2$.}
\label{fig1}
\end{figure}
\begin{figure}[ht]
\epsfxsize250pt
\centerline{\epsfbox{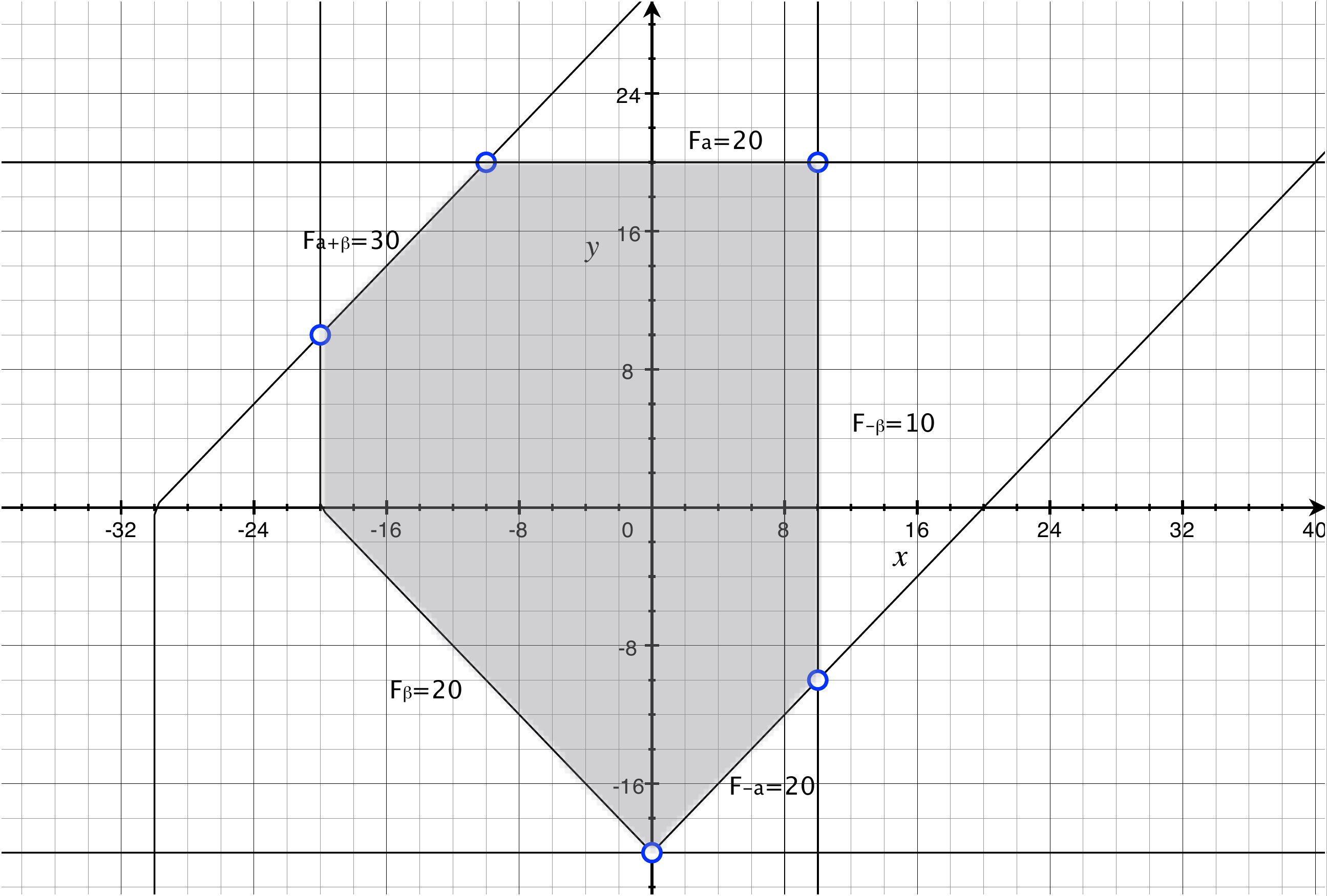}}
\caption{A Stasheff polytope in the tropical cluster $\mc{A}$-variety of type $A_2$.}
\label{fig2}
\end{figure}
\begin{figure}[ht]
\epsfxsize250pt
\centerline{\epsfbox{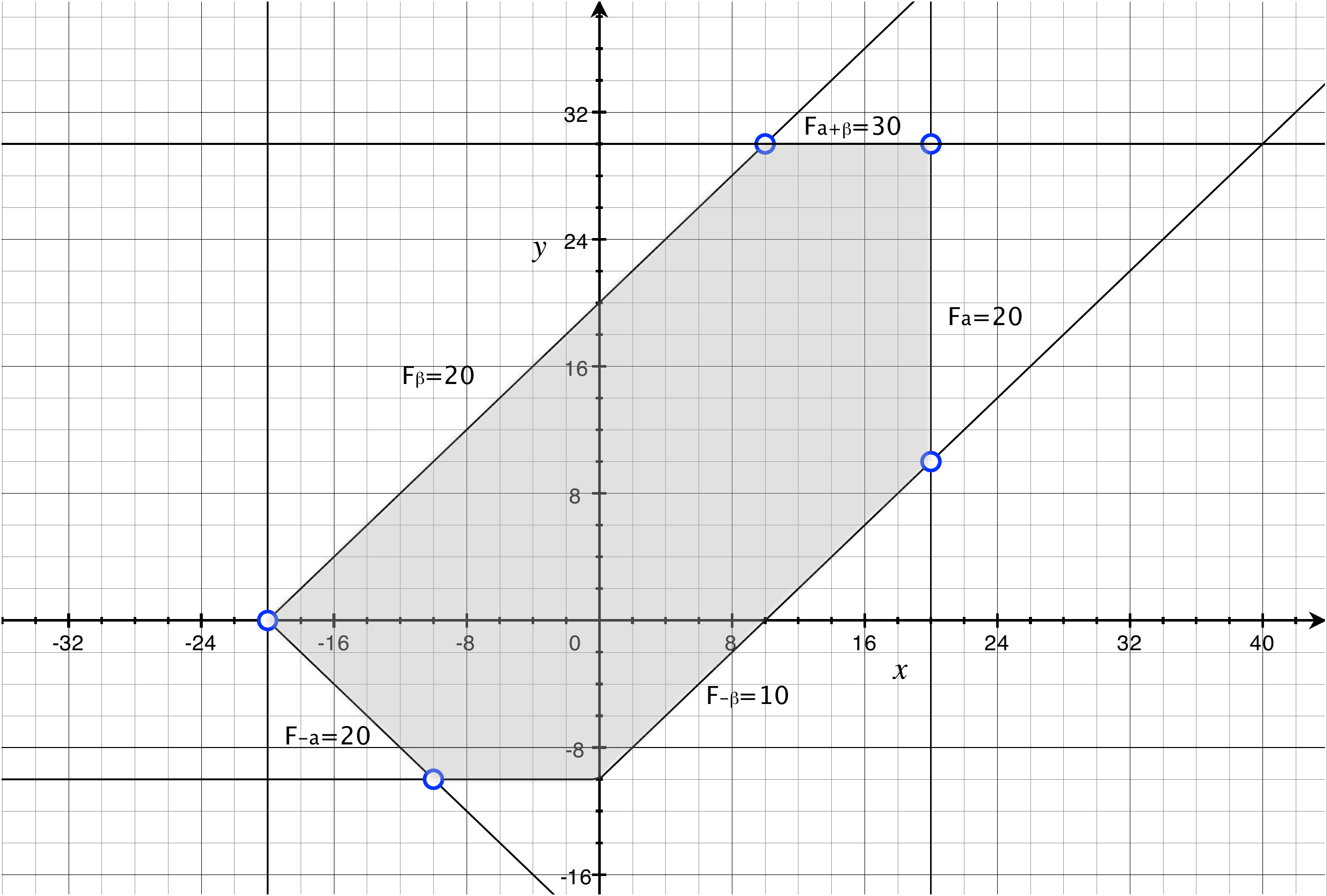}}
\caption{A Stasheff polytope in the tropical cluster $\mc{A}$-variety of type $A_2$.}
\label{fig3}
\end{figure}
\begin{figure}[ht]
\epsfxsize250pt
\centerline{\epsfbox{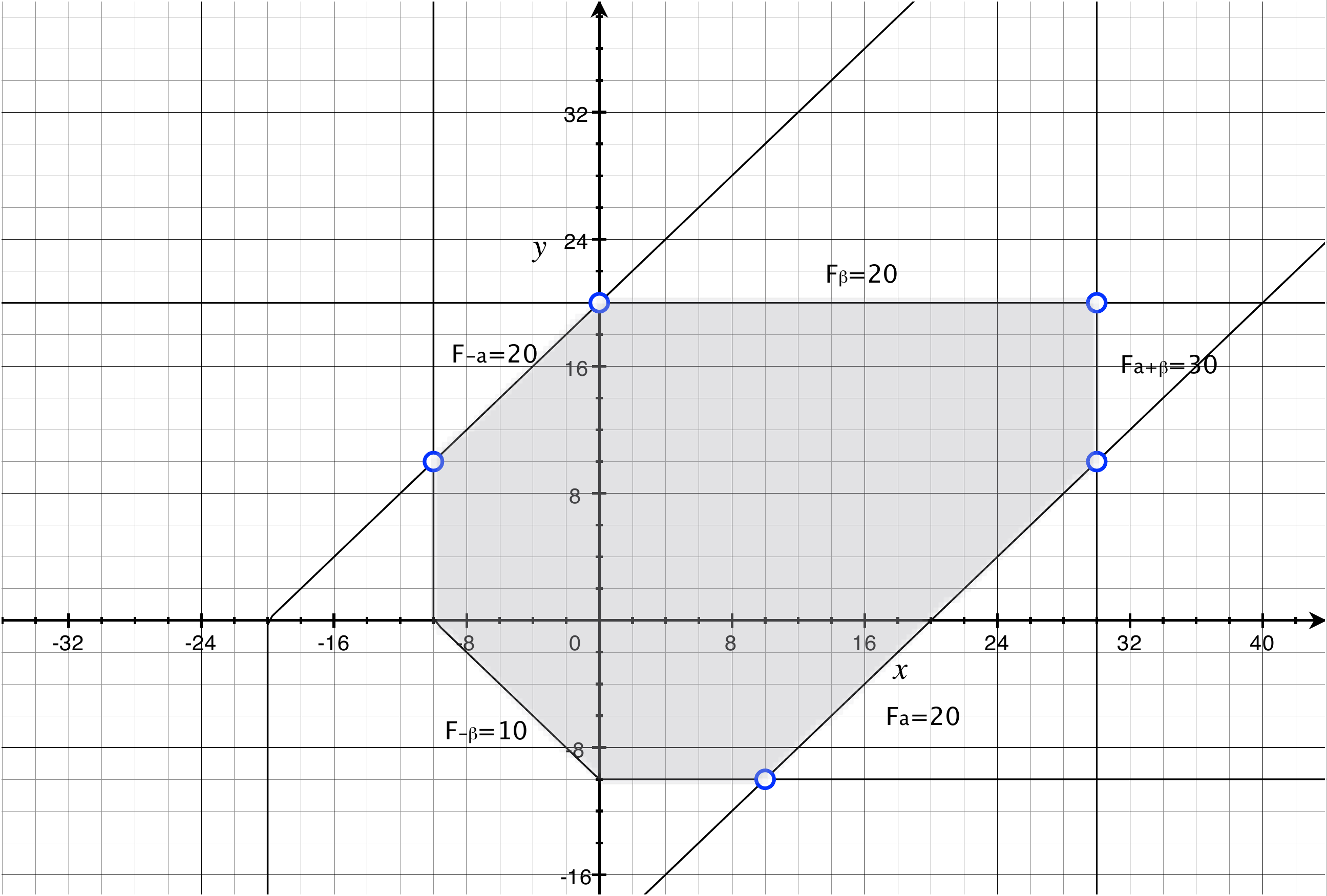}}
\caption{A Stasheff polytope in the tropical cluster $\mc{A}$-variety of type $A_2$.}
\label{fig4}
\end{figure}
\begin{figure}[ht]
\epsfxsize250pt
\centerline{\epsfbox{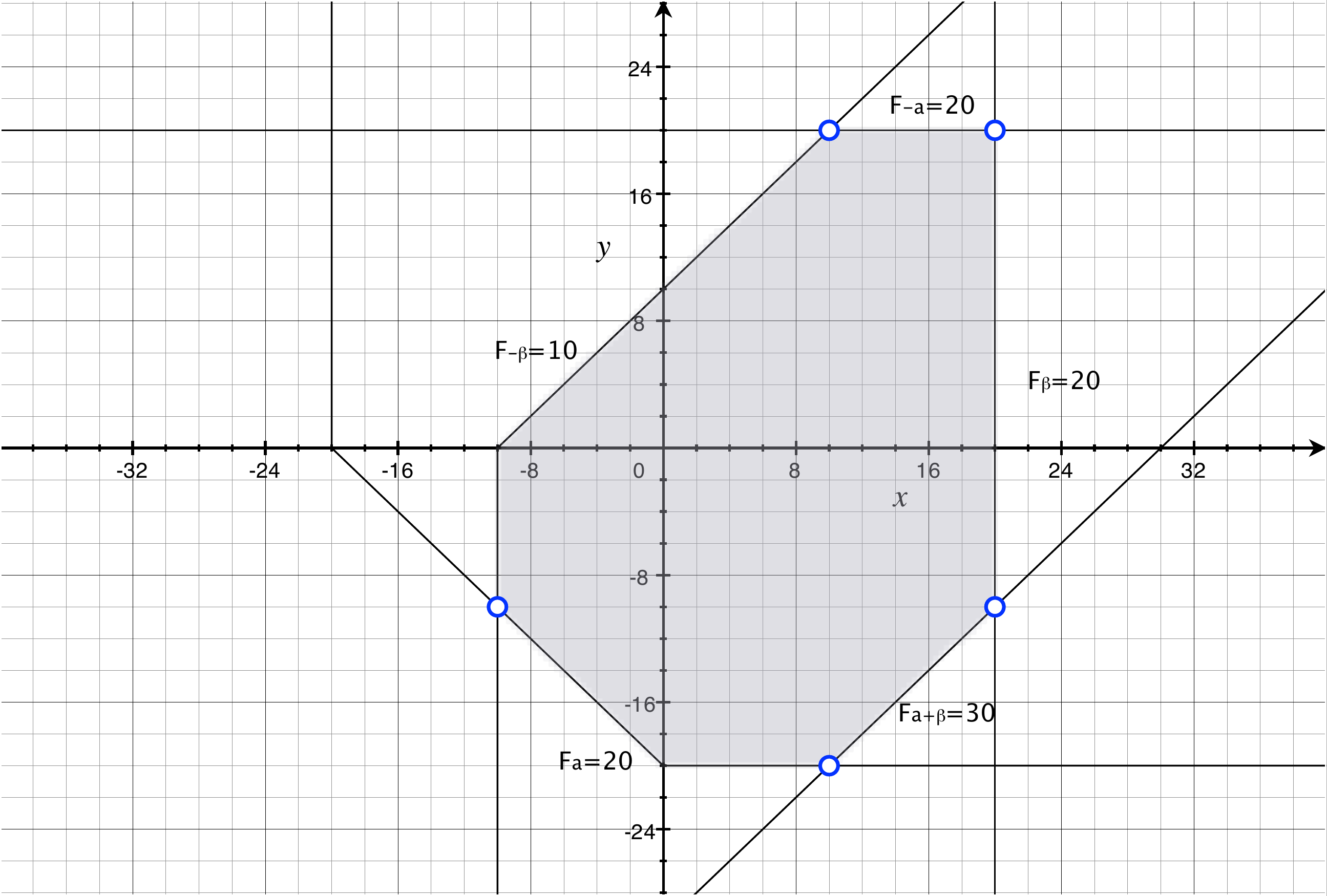}}
\caption{A Stasheff polytope in the tropical cluster $\mc{A}$-variety of type $A_2$.}
\label{fig5}
\end{figure}
\vskip 2mm

Fig.\ref{fig6} is a generalized associahedron of type $B_2$. 

\vskip 2mm

Fig.\ref{fig7} shows a Stasheff polytope of type $A_3$. There are three kinds of lines: visible lines which form part of the faces, lines on the back of the polytope which form part of faces (which are dashed), and ``creases" (which are dotted). Although the three creases are part of the boundary, they are not faces of the Stasheff polytope in the sense of this paper. They appear because the defining functions $A_{\alpha}^t\leq c_{\alpha}$ are not linear in general.  The creases on ${\cal F}_c^T$ will disappear once the tropical cluster variables of \emph{T} are simultaneously linear. For the same reason, the white vertex is not a face either. 

\vskip 2mm

Let $\Pi=\{-\alpha_1,\ldots, -\alpha_n\}$ be the set of simple negative roots. It is a cluster associated to $\Phi_{\geq -1}$. The set $\{A_{-\alpha_1},\ldots, A_{-\alpha_n}\}$ is a positive coordinate system of ${\cal A}_{\Phi}$.
For any root $\alpha=c_1\alpha_1+\ldots+c_n\alpha_n\in {\Phi_{\geq -1}}$, [FZ1, Theorem 5.8] shows that the cluster variable
\be
A_{\alpha}=\frac{P_{\alpha}(A_{-\alpha_1},\ldots, A_{-\alpha_n})}{A_{-\alpha_1}^{c_1}\ldots A_{-\alpha_n}^{c_n}},
\ee
where $P_{\alpha}$ is a polynomial in $A_{-\alpha_1},\ldots, A_{-\alpha_n}$ with nonzero constant term. Clearly every $A_{\alpha}^t$ becomes linear in the negative part
\be \la{6.10.15.03}
\{x\in{\cal A}_{\Phi}({\R}^t)~|~A_{-\alpha_i}^t(x) \leq 0, ~i=1,\ldots,n\}\simeq {\R^n_{\leq 0}}.
\ee
If a Stasheff polytope ${\cal F}_{c}^{\emptyset}$ is contained in (\ref{6.10.15.03}), then all faces of ${\cal F}_{c}^{\emptyset}$ become flat. It coincides with the usual Stasheff polytope in $\R^n$. When ${\Phi}$ is of type $A_n$, such a Stasheff polytope can be easily constructed. For example, given any Stasheff polytope  ${\cal F}_{c}^{\emptyset}$, one can choose a point $x=(-x_1,\ldots, -x_n)\in$ (\ref{6.10.15.03}) with  $x_i\geq 0$ large enough such that the Minkowski sum $\{x\}+{\cal F}_{c}^{\emptyset}$ is contained in (\ref{6.10.15.03}). We can show that $\{x\}+{\cal F}_{c}^{\emptyset}$ is still a 
 Stasheff polytope. It looks like the usual Stasheff polytope in $\R^n$. Fig.\ref{fig9} shows an example of such a polytope.
 
\begin{figure}[ht]
\epsfxsize350pt
\centerline{\epsfbox{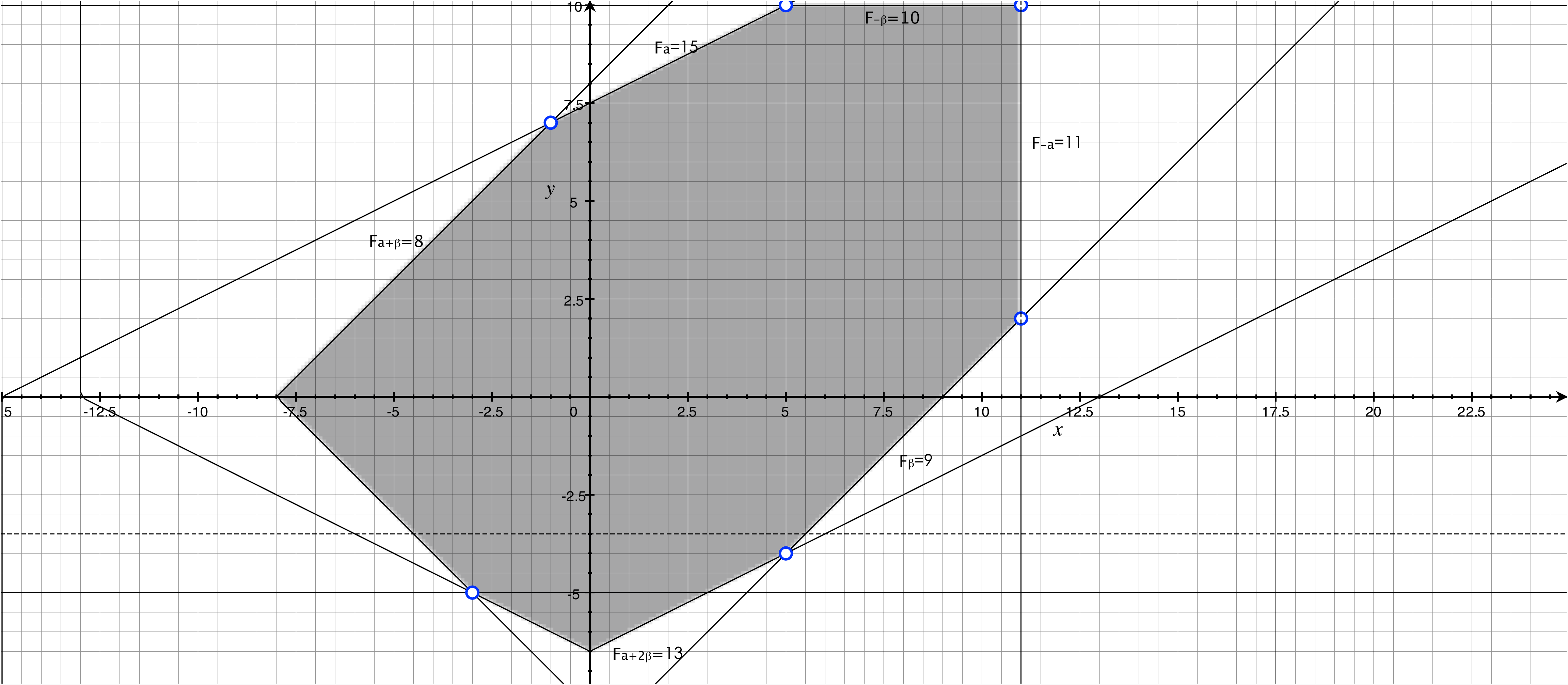}}
\caption{A generalized associahedron of type $B_2$.}
\label{fig6}
\end{figure}

\begin{figure}
\epsfxsize160pt
\centerline{\epsfbox{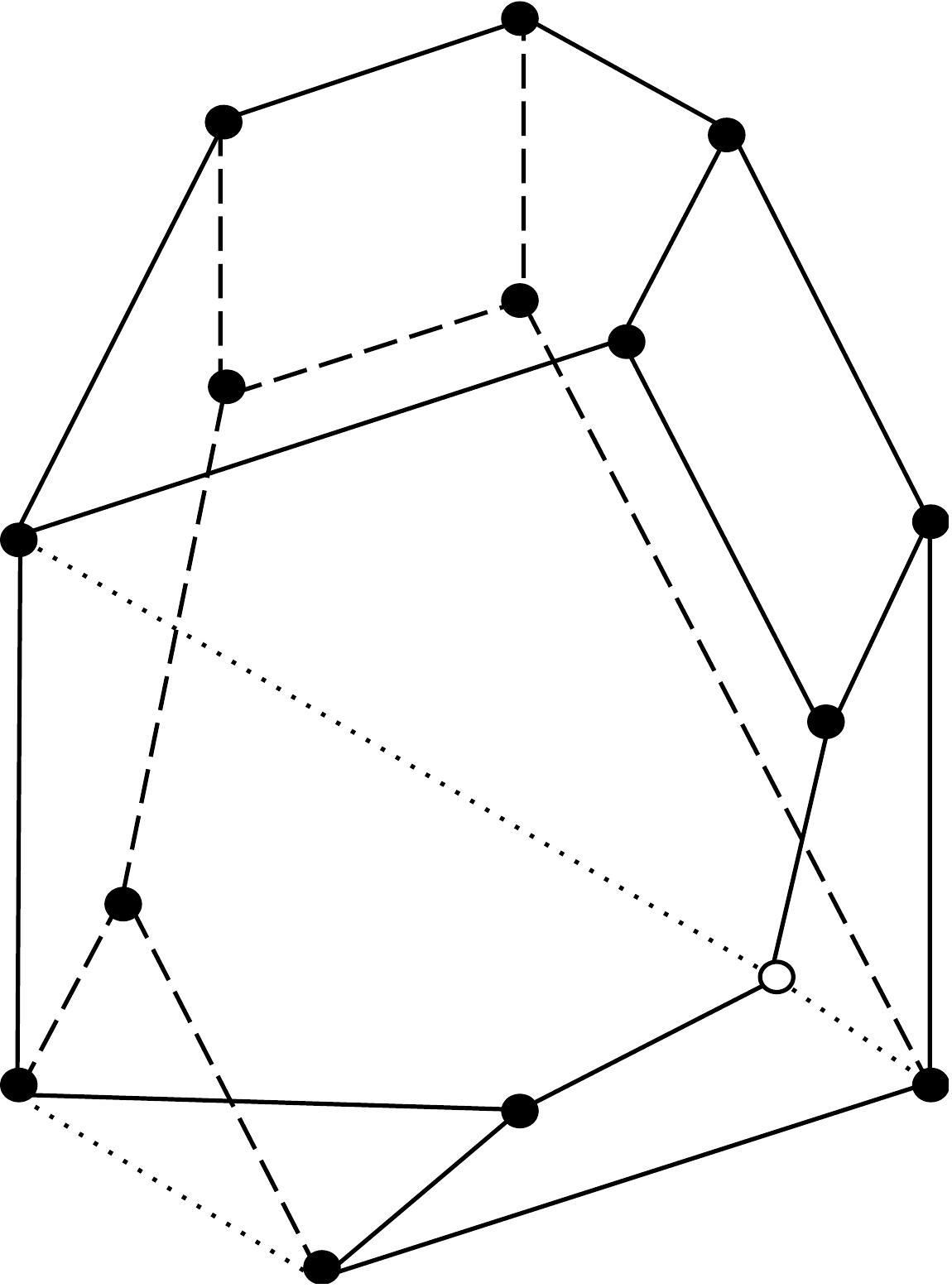}}
\caption{A Stasheff polytope of type $A_3$.}
\label{fig7}
\end{figure}

\begin{figure}[ht]
\epsfxsize250pt
\centerline{\epsfbox{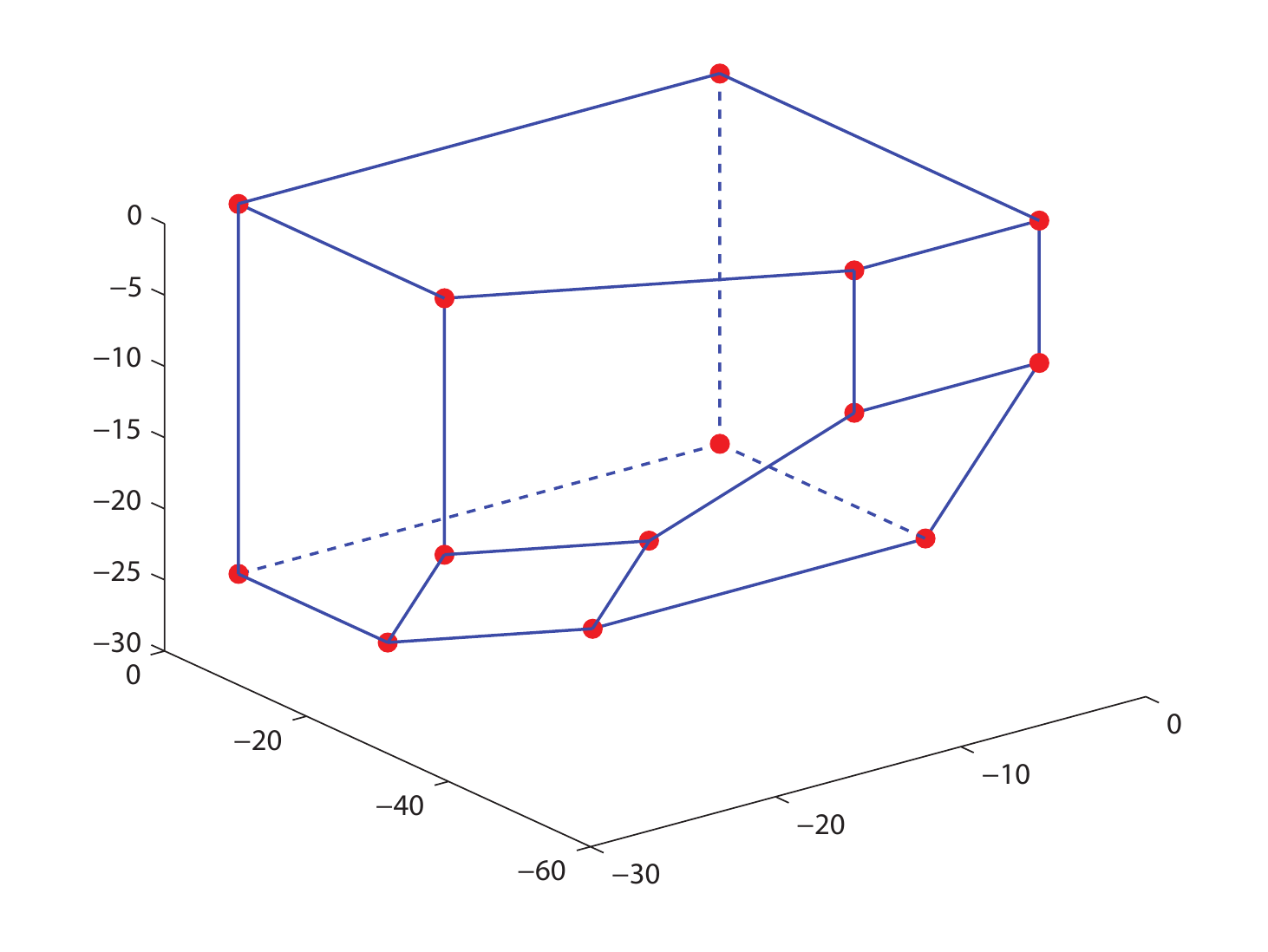}}
\caption{A Stasheff polytope of type $A_3$.}
\label{fig9}
\end{figure}

\vskip 2mm

We will focus on the cluster $\mc{A}$-variety $\mc{A}_{A_n}$ (see Definition \ref{eq37}). Our basic result is as follows. 
\bt
\la{6.3.12.44}
Every single point set $\{l\} \subset \mc{A}_{A_n}(\mathbb{A}^t)$ is a convex set. Given finitely many points $l_1, ..., l_m \in \mc{A}_{A_n}(\mathbb{A}^t)$, their Minkowski sum
\be\sum_{k=1}^ml_k:=\{x\in \mc{A}_{A_n}(\mathbb{A}^t)~|~ F^t(x)\leq \sum_{i=1}^mF^t(l_k) \text{ for all } F\in {\bf E}(\mc{A}_{A_n})\}
\ee
is a Stasheff polytope ${\cal F}_{c}^{\emptyset}$ with
\be
c_{\alpha}=\sum_{k=1}^m A_{\alpha}^t(l_k),~~~\forall \alpha\in \Phi_{\geq -1}.
\ee
\et


\section{Cluster $\mc{A}$-varieties of type $A$}
\label{sec: 3}

\subsection{Definitions and a criterion for Stasheff polytopes}
\label{sec: 3.1}
In this section, we give two realizations of cluster $\mc{A}$-varieties of type $A_n$ (Definition \ref{eq35}, \ref{eq37}), which are the models used in the proof of Theorem \ref{6.3.12.44}. 

Let \emph{S} be an (\emph{n}+3)-gon with vertices labeled by 1 through (\emph{n}+3) clockwise.  We consider the line segments connecting two different vertices. Segments connecting two adjacent vertices are called \emph{edges}. Segments that are not edges are called \emph{diagonals}. 

With an abuse of notation, we will identify a triangulation $T$ of the (\emph{n}+3)-gon with the set of diagonals contained in $T$. Denote by $\widetilde{T}$ the union of $T$ and the set of edges. 

Recall the notion of compatible sets and clusters from Section \ref{sec: 2.2}. In the case of type $A_n$, they have the following description.

\bp [\cite{FZ2}]\la{eq33}
 Let $\Phi$ be a root system of type $A_n$. Its almost positive roots are one-to-one corresponding to the diagonals of an {\rm (\emph{n}+3)}-gon.  Two almost positive roots are compatible if and only if they correspond to two diagonals which have no intersection inside. Every compatible subset  thus corresponds to a {\rm (\emph{partial})} triangulation of the {\rm (\emph{n}+3)}-gon. It is a cluster if and only if the triangulation is complete.
\ep

\vskip 2mm

Denote by $\{ij\}$ the segment connecting the vertices labeled by $i,j$. Assign to each segment $\{ij\}$ a variable $A_{ij}$.  When $i,j,k,l$ are seated clockwise, the following is called the Pl\"{u}cker relation:
\be \la{eq34}
A_{ik}A_{jl}=A_{ij}A_{kl}+A_{il}A_{jk}.
\ee

\bd \la{eq35}
Assign to each complete triangulation T a coordinate system
\be \la{eq36}
\widetilde{\alpha}_{T}=\{A_{ij}~|~\{ij\} \in \widetilde{T}\}.
\ee
Each $\widetilde{\alpha}_{T}$ determines a torus ${\rm Spec}(\mathbb{C}[A_{ij}^{\pm}]),~\{ij\}\in\widetilde{T}$. Denote by $\widetilde{\mc{A}}_{A_n}$ the space obtained by gluing the tori via the transition maps generated by the Pl\"{u}cker relation \eqref{eq34}. 
\ed
\bd \la{eq37}
Assign to each complete triangulation T a coordinate system
 \be \la{eq38}
 \alpha_{T}=\{A_{ij}~|~ \{ij\} \in T\}. 
 \ee
Each $\alpha_{T}$ determines a torus ${\rm Spec}(\mathbb{C}[A_{ij}^{\pm}]),~\{ij\}\in T$. Denote by $\mc{A}_{A_n}$ the space obtained by gluing the tori via the transition maps generated by the Pl$\ddot{u}$cker relation \eqref{eq34} and the following condition:
\be
\la{eq39}
A_{ij}=1 \text{ for all edges } \{i j\}.
\ee
\ed
${\bf Remark.}$ The transition maps defined above are subtraction free. Thus both $\widetilde{\mc{A}}_{A_n}$ and $\mc{A}_{A_n}$ are positive spaces. They are cluster $\mc{A}$-varieties of type $A_n$ with different coefficients. By \cite{FZ1} \cite{C}, indecomposable functions on these spaces are monomials in the $A_{ij}$'s from the same coordinate systems.

\vskip 2mm

 In particular, we are interested in $\mc{A}_{A_n}$ which corresponds to the case with reduced seed. Given  a set of real numbers  $c=\{c_{ij}\}$ indexed by the set of diagonals, every partial triangulation $T$ gives rise to a face:
\begin{align} \la{eq40}
\mc{F}_c^T &=\{x\in \mc{A}_{A_n}(\mathbb{R}^t)~|~A_{ij}^t(x)=c_{ij} \text{ for all diagonals } \{ij\}\in T,  \nonumber\\ &\qquad \text{ and } A_{ij}^t(x)\leq c_{ij} \text{ for all diagonals } \{ ij\}\in {\rm S}(T)\}.
\end{align}
When $T$ contains no diagonals, we obtain the $\emptyset$-face: 
\be \la{eq41}
\mc{F}_c^{\emptyset}=\{x\in \mc{A}_{A_n}(\mathbb{R}^t)~|~ A_{ij}^t(x)\leq c_{ij} \text{ for all diagonals } \{ij\}\}.
\ee
If $T$ is complete, then  $\mc{F}_c^T$ contains only one point. In this case, $\mc{F}_c^T$ is called a vertex. Our next theorem provides a criterion for recognizing Stasheff polytopes in $\mc{A}_{A_n}(\mathbb{R}^t)$.\label{sec: 3.2}
\bt \la{eq42}
The polytope ${\cal F}_{c}^{\emptyset}$ defined by \eqref{eq41} is a Stasheff polytope if and only if every vertex $\mc{F}_c^T$ is contained in ${\cal F}_{c}^{\emptyset}$.
\et

\begin{proof}
The ``only if" part is by definition. 

We tropicalize the Pl\"{u}cker relation and the condition \eqref{eq39}:
\be \la{eq44}
A_{pr}^t+A_{qs}^t=\max\{A_{pq}^t+A_{rs}^t, A_{qr}^t+A_{ps}^t\}  \quad\text{ if $p,q,r,s$ are seated clockwise};
\ee
\begin{equation}
A_{pq}^t=0 \quad \text{ if $p,q$ are adjacent.}
\end{equation}

For the ``if" part, given a partial triangulation $T_1$, let $\alpha=\{ij\}\in {\rm S}(T_1)$. Then  $T_2=T_1\bigcup\{\alpha\}$ is still a triangulation. By induction, we only need to show that $\mc{F}_c^{T_2} \subseteq \mc{F}_c^{T_1}$.
It is enough to show that $\forall\{kl\}\in{\rm S}(T_1),~\forall y\in {\cal F}_c^{T_2},$
\be \la{6.10.16.34}
A_{kl}^t(y)\leq c_{kl}.
\ee

If $\{kl\}\in {\rm S}(T_2)$ or $\{kl\}=\{ij\}$, then (\ref{6.10.16.34}) follows by definition. Otherwise $\{kl\}$ intersects $\{ij\}$. Thus the vertices $i, k, j, l$ are seated clockwise. Consider the segments $\{ik\},\{kj\}, \{jl\}, \{il\}$. They are either in ${\rm S}(T_2)\cup T_1$ or they are edges. Meanwhile, these four segments are mutually compatible. Therefore, there exists a cluster $T$ that contains $T_2$ and all the diagonals among $\{ik\},\{kj\}, \{jl\}, \{il\}$. Such a cluster $T$ gives rise to a vertex $\mc{F}_c^{T}:=\{x\}.$ Now by assumption, one has $x\in {\cal F}_{c}^{\emptyset}$. Therefore
\be
\la{eq45}
c_{kl}\geq A_{kl}^t(x)=\max\{A_{ik}^t(x)+A_{jl}^t(x), A_{il}^t(x)+A_{kj}^t (x)\}-A_{ij}^t(x)=\max\{c_{ik}+c_{jl}, c_{il}+c_{kj}\}-c_{ij}.
\ee 
Here $c_{pq}=0$ if the segment $\{pq\}$ is an edge. For any $y\in \mc{F}_c^{T_2}$, by definition,
\be
\la{eq46}
A_{ik}^t(y)\leq c_{ik}, \quad A_{jl}^t(y)\leq c_{jl}, \quad A_{il}^t(y)\leq c_{il}, \quad A_{kj}^t (y)\leq c_{kj}, \quad A_{ij}^t(y)=c_{ij}.
\ee
By \eqref{eq44}, \eqref{eq45} and \eqref{eq46}, 
\be
\la{eq47}
A_{kl}^t(y)=\max\{A_{ik}^t(y)+A_{jl}^t(y), A_{il}^t(y)+A_{kj}^t (y)\}-A_{ij}^t(y)\leq c_{kl}.
\ee 
The Theorem is proved. 
\end{proof}

\subsection{Proof of Theorem \ref{6.3.12.44}}
\label{sec: 3.3}
Theorem \ref{6.3.12.44} is a consequence of the following Lemmas. 
\bl \la{eq48}
Given finitely many points $l_1, ..., l_m \in \mc{A}_{A_n}(\mathbb{R}^t)$ and a set $c=\{c_{ij}\}$ of real numbers:
\be
\la{eq49}
c_{ij}=\sum_{k=1}^mA_{ij}^t(l_k),
\ee
the polytope ${\cal F}_{c}^{\emptyset}$ defined by \eqref{eq41} is a Stasheff polytope.
\el
\begin{proof}
By Theorem \ref{eq42}, it is enough to show that all vertices are contained in ${\cal F}_{c}^{\emptyset}$. Each cluster $T$ gives rise to a coordinate system $\alpha_{T}=\{A_{ij}~|~ \{ij\}\in T\}$, that maps $\mc{A}_{A_n}(\mathbb{R}^t)$ isomorphically to $\mathbb{R}^n$. Let $l$ be the sum of $l_k$'s as vectors in $\mathbb{R}^n$. Clearly $A_{ij}^t(l)=c_{ij} ,~\forall\{ij\}\in T$. In other words, $\mc{F}_c^T=\{l\}$. Meanwhile, all tropical cluster variables are convex piecewise linear functions in this coordinate system $\alpha_T$.  The convexity shows that 
\be
\la{eq51}
A_{ij}^t(l)\leq\sum_{k=1}^mA_{ij}^t(l_k)=c_{ij}, \text{ for all diagonals } \{ij\}.
\ee
Therefore $\mc{F}_{c}^T\subseteq {\cal F}_{c}^{\emptyset}$. The Lemma is proved.
\end{proof}
\bl
\la{eq52}
The polytope ${\cal F}_{c}^{\emptyset}$ in Lemma \ref{eq48}  is the Minkowski sum of the defining $l_i$'s. 
\el
\begin{proof}
By definition, the Minkowski sum of the defining $l_i$'s is contained in ${\cal F}_{c}^{\emptyset}$. It remains to show that ${\cal F}_{c}^{\emptyset}$ is contained in the Minkowski sum.
For cluster $\mc{A}$-varieties of classical type, all indecomposable functions are cluster monomials. Namely,  $\forall F\in \exset{A}$, there exists a cluster $T$ such that  $F^t=\sum_{\{ij\}\in T} a_{ij}A_{ij}^t$, where the numbers $a_{ij}$ are all nonnegative integers. Thus $\forall x\in {\cal F}_{c}^{\emptyset}$,
\be
\la{eq54}
F^t(x)=\sum_{\{ij\}\in T}a_{ij}A_{ij}^t(x)\leq\sum_{\{ij\}\in T}a_{ij}c_{ij}=\sum_{k}F^t(l_k).
\ee
 The Lemma is proved.
\end{proof}

\begin{proof} (Of Theorem \ref{6.3.12.44})
By Lemma \ref{eq48} and \ref{eq52}, the second part is proved. It remains to show that every single point set $\{l\}\subset\mc{A}_{A_n}(\mathbb{R}^t)$ is convex. Notice that its convex hull $C(\{l\})$ gives rise to a Stasheff polytope whose only vertex is $l$. By induction on the dimensions of the faces, one can easily show that all faces of this polytope contain only $l$. The first part is proved.
\end{proof}
{\bf Remark.} In fact, suppose $\{l\}$ is not convex, by the same argument used in the proof of Theorem  
\ref{eq61}, one can show that there exist cyclic ordered $k,s,m,t$ such that the tropical Pl\"ucker relation fails on $l$. This will give another proof of the convexity of $\{l\}$.

\vskip 3mm

The proof of Theorem \ref{eq42} crucially uses \eqref{eq45}. It provides another criterion for recognizing Stasheff polytopes.
\bt
\la{eq55}
The polytope ${\cal F}_{c}^{\emptyset}$ defined in \eqref{eq41} is a Stasheff polytope if and only if 
\begin{equation} \la{6.10.9.36}
c_{ij}+c_{kl}\geq \max\{c_{ik}+c_{jl}, c_{kj}+c_{il}\} \text{ for all $i,k,j,l$ seated clockwise.}
\end {equation}
Furthermore, ${\cal F}_{c}^{\emptyset}$ is non-degenerate if and only if these inequalities are strict for all $i,k,j,l$ seated clockwise.
\et
\begin{proof}
Let $T$ be a cluster containing all diagonals among $\{ik\},\{jl\},\{kj\},\{il\},\{ij\}$. Let ${\cal F}_{c}^T=\{x\}$. If $\{x\}\in {\cal F}_c^{\emptyset}$, then by (\ref{eq45}), the condition (\ref{6.10.9.36}) follows.  The ``only if'' part of the first statement is proved. The ``if'' part follows from the same argument used in the proof of Theorem \ref{eq42}.
By induction on the dimensions of the faces, the second statement follows.
\end{proof}

\subsection{Products of elements of a canonical basis}
\label{sec: 3.4}

Let $\widetilde{\mc{A}}_{A_n}$be as in Definition \ref{eq35}. The set ${\bf E}(\widetilde{\mc{A}}_{A_n})$ provides a canonical basis for the coordinate ring of $\widetilde{\mc{A}}_{A_n}$ (\cite{C}, Theorem 1.1). In this section, we study the partial order structure on ${\bf P}(\widetilde{\mc{A}}_{A_n})$.

Label the vertices of an (\emph{n}+3)-gon by 1 through (\emph{n}+3) clockwise as before. Every $F\in {\bf P}(\widetilde{\mc{A}}_{A_n})$ can be expressed as a product of the variables $A_{ij}$ corresponding to segments $\{ij\}$ of the (\emph{n}+3)-gon. Thus \emph{F} can be represented by a \emph{weighted~graph}.

\bd
\la{eq56} 
A weighted graph is a collection of segments of an {\rm (\emph{n}+3)}-gon with integral weights such that the weights of diagonals are non-negative. 
\ed

We present a weighted graph by a symmetric {\rm (\emph{n}+3)$\times$(\emph{n}+3)} matrix $G=(w_{ij})$, such that $w_{ij}$ is the weight of $\{ij\}$ if $i\neq j$ and $w_{ii}=0$ otherwise. The matrix $G$ is called trivial if all its entries are zero. 

The map 
\be \la{eq58}
\mathbb{I}(G)=\prod_{1\leq i< j\leq n+3}A_{ij}^{w_{ij}}
\ee 
induces a surjection from the set of weighted graphs to  ${\bf P}(\widetilde{\mc{A}}_{A_n})$. We provide a criterion for determining the partial order on ${\bf P}(\widetilde{\mc{A}}_{A_n})$.
\bd \la{eq59}
For any matrix $G=(w_{ij})$,  we set
\begin{align}
\la{eq60}
\Gamma_{kl}(G):=\frac{1}{2}\sum_{k\leq i,j \leq l}w_{ij},~~~\forall 1\leq k\leq l\leq n+3;\nonumber\\
R_{p}(G):=\sum_{1\leq j\leq n+3}w_{pj},~~~\forall 1\leq p \leq n+3 .
\end{align}
\ed
 \bt
\la{eq61}
 Given two weighted graphs presented by matrices $G_1$, $G_2$,  if 
 \begin{enumerate}
\item $ \Gamma_{kl}(G_1)\geq \Gamma_{kl}(G_2),~\forall 1\leq k\leq l\leq n+3$,
\item $ R_p(G_1)=R_p(G_2),~ \forall 1\leq p\leq n+3$,
 \end{enumerate}
  then $\mathbb{I}(G_1)\leq \mathbb{I}(G_2)$.
\et

 {\bf Remark.} Let $[p,q]:=\{p,\ldots, q\}$. For any $G=(w_{ij})$,
define 
\be
\la{eq63}
\disp{I_{kl}(G)=\sum_{k+1\leq i \leq l}R_i(G)-2 \Gamma_{k+1,l}(G)=\sum_{i\in I,j\in J}w_{ij}},
\ee
where $I=[k+1,l]$, $J=[1,n+3]-I$ are partitions of the set of vertices obtained by cutting the boundary of the (\emph{n}+3)-gon into two connected parts. Geometrically, $I_{kl}$ is the sum of weighted segments that connect $I$ and $J$, and $R_p(G)$ is the sum of weighted segments emanating from the vertex labeled $p$. The two conditions in Theorem \ref{eq61} is equivalent to the condition that
\begin{align}
\la{eq64}
R_p(G_1)=&R_p(G_2), ~\forall {~\rm vertices~}p,\nonumber\\
 I_{kl}(G_1)\leq &I_{kl}(G_2),~\forall {~\rm diagonals~} \{kl\}.
\end{align}
The condition \eqref{eq64} is more geometric and does not depend on the labeling of the vertices. Because of this, if necessary, one can relabel the vertices so that the cyclic order is preserved. The conditions in the Theorem are easier for calculation.

\vskip 2mm

Let $G_1=(u_{ij}), G_2=(v_{ij})$ be two weighted graphs satisfying condition \eqref{eq64}. Let $G=(s_{ij})$ such that 
 $s_{ij}=\min\{u_{ij},v_{ij}\}$. Then $G, G_1-G, G_2-G$ are all weighted graphs and 
 \be
 \la{eq65}
 \mathbb{I}(G_i)=\mathbb{I}(G_i-G)\mathbb{I}(G), \text{ for } i=1,2. 
 \ee
Clearly $\mathbb{I}(G_1-G)\leq\mathbb{I}(G_2-G)$ implies $\mathbb{I}(G_1)\leq \mathbb{I}(G_2)$. Notice that $G_1-G$, $G_2-G$ still satisfy condition \eqref{eq64}.  Theorem \ref{eq61} can be reduced to the case when 
 \be
 \la{eq66}
 G_1=(u_{ij}),~G_2=(v_{ij})\text{~such that~}
 \min\{u_{ij}, v_{ij}\}=0, ~\forall \{ij\}.
 \ee
\bd
\la{eq67}
The length of the segment $\gamma=\{i j\}$ is
\be
\la{eq68}
l(\gamma)=\min\{|i-j|, n+3-|i-j|\}.
\ee 
For each nontrivial weighted graph $G$, the depth of {G} is 
\be
{\rm dep}(G):=\min_{\gamma~|~w_{\gamma}\neq 0}\{l(\gamma)\}.
\ee
\ed 

Clearly both are well defined as long as the cyclic order of the labeling is preserved.
\bl
\la{eq69}
If $G_1$ and $G_2$ are two nontrivial weighted graphs satisfying both conditions \eqref{eq64} and \eqref{eq66}, then 
\be
\la{eq70}
1\leq {\rm dep}(G_1)<{\rm dep}(G_2)\leq \frac{n+3}{2}.
\ee
\el

\begin{proof} Here $1\leq {\rm dep}(G_1)$ and ${\rm dep}(G_2)\leq (n+3)/2$ are clear. It remains to show that  ${\rm dep}(G_1)<{\rm dep}(G_2)$. 

Let $\alpha$ be the shortest segment such that its weight $v_{\alpha}$ in $G_2$ is strictly positive. Let $k:=l(\alpha)={\rm dep}(G_2)$. Relabel the vertices such that $\alpha=\{1, k+1\}$. By definition and the first condition in Theorem \ref{eq61},  we get
\be \la{6.5.1}
0<v_{1,k+1}=\sum_{1\leq i<j\leq k+1}v_{ij}=\Gamma_{1, k+1}(G_2)\leq \Gamma_{1,k+1}(G_1)=\sum_{1\leq i<j\leq k+1}u_{ij}
\ee  
There is at least one nonzero $u_{ij}$ on the right hand side of (\ref{6.5.1}). Here $|i-j|< k$ unless $\{ij\}=\{1,k+1\}$. Since $v_{1,k+1}\neq 0$, condition \eqref{eq66} tells us that $u_{1,k+1}=0$. Therefore ${\rm dep}(G_1)<k={\rm dep}(G_2)$. The Lemma is proved.
\end{proof} 

\begin{proof}
(Of Theorem \ref{eq61})
Let $G_1=(u_{ij}), G_2=(v_{ij})$ be weighted graphs such that both \eqref{eq64} and \eqref{eq66} hold. If one of the graphs is trivial, then by the first condition of (\ref{eq64}), $R_l(G_1)=R_l(G_2)=0,~\forall 1\leq l\leq n+3$. By \eqref{eq66}, every weight is non-negative. Therefore both graphs are trivial. Theorem \ref{eq61} follows directly. 

Assume both $G_1$ and $G_2$ are nontrivial. Let $\alpha$ be the shortest segment such that its weight $u_{\alpha}$ in $G_1$ is nonzero. Let $k:=l(\alpha)={\rm dep}(G_1)$. Relabel the vertices such that $\alpha=\{k,n+3\}$. By Lemma \ref{eq69}, the depths of both $G_1$ and $G_2$ are greater than $k-1$. Thus $\Gamma_{1,k-1}(G_1)=\Gamma_{1,k-1}(G_2)=0$. By definition $\Gamma_{1,n+3}(G)=\frac{1}{2}\sum_{l=1}^{n+3}R_l(G)$. Thus $\Gamma_{1, n+3}(G_1)=\Gamma_{1,n+3}(G_2)$. Thus 
\be
\la{eq73}
\Gamma_{k,n+3}(G_1)=\Gamma_{1,n+3}(G_1)+\Gamma_{1, k-1}(G_1)-\sum_{1\leq i\leq k-1}R_i(G_1)=\Gamma_{k,n+3}(G_2).
\ee
The weight $u_{k, n+3}=u_{\alpha}>0$. Thus $R_k(G_2)=R_k(G_1)>0.$ By Lemma \ref{eq69}, ${\rm dep}(G_2)>k$. Therefore $v_{kj}=0$ for all $j \in [1 ,2k] \cup \{n+3\}$. There is at least one $j\in [2k+1,n+2]$ such that $v_{kj}>0$ because $R_{k}(G_2)>0$. Let $m$ be the largest one in $[2k+1, n+2]$ such that $v_{km}>0$. Notice that 
\begin{align}
\la{eq74}
\Gamma_{km}(G_2)+\Gamma_{m,n+3}(G_2)&\leq \Gamma_{km}(G_1)+\Gamma_{m,n+3}(G_1)\leq \Gamma_{k,n+3}(G_1)-u_{k,n+3}\nonumber \\ &<\Gamma_{k,n+3}(G_1)=\Gamma_{k,n+3}(G_2).
\end{align}
By the choice of $m$, we have $\disp{\sum_{m<t\leq n+3}v_{kt}=0.}$ Therefore, from \eqref{eq74}, 
\be
\la{eq75}
\sum_{k<s<m; \text{ }m<t\leq n+3}v_{st}=\sum_{k\leq s<m; \text{ }m<t\leq n+3}v_{st}=\Gamma_{k,n+3}(G_2)-\Gamma_{km}(G_2)-\Gamma_{m,n+3}(G_2)>0.
\ee

At least one of the $v_{st}$'s on the left hand side of \eqref{eq75} is strictly positive. Let $v_{st}>0$ be the one among them such that $s$ is the smallest, and $t$ is the smallest when $s$ is fixed. Now we have two positive entries $v_{km}$ and $v_{st}$, where the vertices labeled $k, s, m, t$ are seated clockwise. Define a new weighted graph $G_3=(w_{ij})$ via $G_2=(v_{ij})$ such that \eqarray{w_{ij}=w_{ji}}{ll}{v_{ij}-1,&\text{ if } \{ij\}=\{km\} \text{ or }\{st\};\\
                                              v_{ij}+1,&\text{ if } \{ij\}=\{kt\} \text{ or }\{sm\};\\
                                              v_{ij},     &\text{ otherwise} .
}
Therefore $ R_i(G_3)=R_i(G_2) \text{ for every vertex $i$},$ and 
\eqarray{\Gamma_{ij}(G_3)}{ll}{\Gamma_{ij}(G_2)+1 &\text{ if } k<i\leq s, \text{and } m\leq j<t\\
                                                            \Gamma_{ij}(G_2)      &\text{ otherwise.} \label{eqa}}

Clearly $\mathbb{I}(G_2)>\mathbb{I}(G_3)$ because of the Pl\"{u}cker relation $A_{km}A_{st}=A_{ks}A_{mt}+A_{kt}A_{ms}$.
If one can show that for these two new  weighted graphs $G_1, G_3$, condition \eqref{eq64} still holds, then Theorem \ref{eq61} is proved via induction. By \eqref{eqa}, it remains to show that
\be
\la{eq76}
\Gamma_{ij}(G_2)<\Gamma_{ij}(G_1) \text{ for all } k<i\leq s, \text{ and } m\leq j<t.
\ee
For all such $i,j$ in \eqref{eq76}
\Earray{c}{ \la{eq761}
\disp{\Gamma_{kj}(G_2)-\Gamma_{ij}(G_2)+\Gamma_{1,i-1}(G_2)-\Gamma_{1,k-1}(G_2)= \sum_{k\leq p <i ; 1\leq q\leq j} v_{pq}=\sum_{k\leq p <i}R_{p}(G_2)}\\
                   \disp{   =\sum_{k\leq p <i}R_{p}(G_1) >\sum_{k\leq p <i ; 1\leq q\leq j} u_{pq}=\Gamma_{kj}(G_1)-\Gamma_{ij}(G_1)+\Gamma_{1,i-1}(G_1)-\Gamma_{1,k-1}(G_1)}.
}
The inequality in (\ref{eq761}) follows from the fact that
$$\disp{ \sum_{k\leq p <i}R_{p}(G_1) -\sum_{k\leq p <i ; 1\leq q\leq j} u_{pq}\geq u_{k, n+3}>0}.$$
By the first assumption of Theorem \ref{eq61}, we have $\Gamma_{kj}(G_2)\leq \Gamma_{kj}(G_1)$ and $\Gamma_{1,i-1}(G_2)\leq \Gamma_{1,i-1}(G_1)$. Consider the depths of both $G_1$ and $G_2$, $\Gamma_{1,k-1}(G_2)=\Gamma_{1,k-1}(G_1)=0$. Thus \eqref{eq76} follows immediately from \eqref{eq761}.
\end{proof}

\begin{theorem}\label{thm2.4}
The map \eqref{eq58} is a bijection from the set of weighted graphs to ${\bf P}(\widetilde{\mc{A}}_{A_n})$. Furthermore, $\mathbb{I}(G_1)\leq\mathbb{I}(G_2)$ if and only if the condition \eqref{eq64} holds. 
\end{theorem}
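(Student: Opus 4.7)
The strategy is to first establish the characterization $\mathbb{I}(G_1)\le \mathbb{I}(G_2)$ iff \eqref{eq64} holds, and then derive the bijection as a corollary. Surjectivity of \eqref{eq58} is immediate: each element of ${\bf E}(\widetilde{\mathcal{A}}_{A_n})$ is a cluster monomial in the $A_{ij}$, so products of these realize exactly the set $\{\mathbb{I}(G)\}$. The ``if'' direction of the order characterization is precisely Theorem~\ref{eq61} combined with the equivalence of its conditions (i)--(ii) with \eqref{eq64} indicated in the Remark following that theorem, which rests on the identity $I_{kl} = \sum_{k+1\le i\le l} R_i - 2\Gamma_{k+1,l}$.

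For the ``only if'' direction, assume $\mathbb{I}(G_2)-\mathbb{I}(G_1)\in \mathbb{L}_+(\widetilde{\mathcal{A}}_{A_n})$. I extract the two halves of \eqref{eq64} separately. To obtain $R_p(G_1)=R_p(G_2)$, equip the coordinate ring of $\widetilde{\mathcal{A}}_{A_n}$ with the $\mathbb{Z}^{n+3}_{\ge 0}$-grading defined by $\deg A_{ij}=e_i+e_j$, which is well defined because the Pl\"ucker relation \eqref{eq34} is homogeneous. Each $\mathbb{I}(G)$ is then homogeneous of multi-degree $\sum_p R_p(G)\,e_p$, and the graded components of any element of $\mathbb{L}_+$ themselves lie in $\mathbb{L}_+$. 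Hence in the identity $\mathbb{I}(G_2)=\mathbb{I}(G_1)+H$ with $H\in \mathbb{L}_+$, the two sides must be supported in a single common multi-degree (otherwise $-\mathbb{I}(G_1)$ would appear as a graded piece of $H$, contradicting positivity), yielding $R_p(G_1)=R_p(G_2)$ for every $p$.

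To obtain $I_{kl}(G_1)\le I_{kl}(G_2)$ for each diagonal $\{kl\}$, I introduce a ``crossing-number'' tropical point $y_{kl}\in \widetilde{\mathcal{A}}_{A_n}(\mathbb{R}^t)$ defined by $A_{ij}^t(y_{kl})=1$ if $\{ij\}$ crosses the chord $\{kl\}$, and $A_{ij}^t(y_{kl})=0$ otherwise. A short case analysis over the cyclic distribution of four vertices $p,q,r,s$ among the two arcs cut out by $\{kl\}$ verifies the tropical Pl\"ucker relation \eqref{eq44}. Positivity of $\mathbb{I}(G_2)-\mathbb{I}(G_1)$ tropicalizes to the pointwise inequality $\mathbb{I}(G_1)^t\le \mathbb{I}(G_2)^t$, and direct evaluation gives $\mathbb{I}(G)^t(y_{kl})=I_{kl}(G)$; this yields the desired inequality. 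Combined with the $R$-equality, this is exactly \eqref{eq64}.

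Injectivity of \eqref{eq58}, and hence the bijection, then follows by applying the equivalence to both $\mathbb{I}(G_1)\le \mathbb{I}(G_2)$ and its reverse in the case $\mathbb{I}(G_1)=\mathbb{I}(G_2)$: the resulting two-sided inequalities force $\Gamma_{kl}(G_1)=\Gamma_{kl}(G_2)$ for every $1\le k\le l\le n+3$, and the inclusion-exclusion identity $w_{pq}=\Gamma_{pq}-\Gamma_{p,q-1}-\Gamma_{p+1,q}+\Gamma_{p+1,q-1}$ (with boundary $\Gamma$'s interpreted as $0$) recovers each individual weight, giving $G_1=G_2$. I expect the main technical obstacle to be the verification that $y_{kl}$ is a legitimate tropical point: the case analysis for \eqref{eq44} must enumerate all cyclic positions of four vertices relative to $\{kl\}$, and ruling out the alternating configuration $\{p,r\}\subset I$, $\{q,s\}\subset J$ relies essentially on the fact that the two arcs cut out by the chord are connected intervals of the polygon.
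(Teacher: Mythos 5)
Your overall architecture is sound, and your route to the ``only if'' direction is genuinely different from the paper's: the paper iterates the Pl\"ucker exchange to decompose $\mathbb{I}(G)$ into indecomposables $\mathbb{I}(L_i)$ while tracking $I_{kl}(G)=\max_i I_{kl}(L_i)$, whereas you extract $R_p(G_1)=R_p(G_2)$ from a $\mathbb{Z}^{n+3}$-grading (which is correct: the Pl\"ucker relation is homogeneous for $\deg A_{ij}=e_i+e_j$, graded pieces of $\mathbb{L}_+$ stay in $\mathbb{L}_+$, and $\mathbb{I}(G)$ is homogeneous of degree $\sum_p R_p(G)e_p$) and extract $I_{kl}(G_1)\le I_{kl}(G_2)$ by evaluating tropicalizations at a test point. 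The ``if'' direction, surjectivity, and the injectivity argument via inclusion--exclusion all match the paper in substance.

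However, there is a genuine error in the step you yourself flag as the main technical obstacle: the point $y_{kl}$ defined by $A_{ij}^t(y_{kl})=1$ iff $\{ij\}$ \emph{crosses} $\{kl\}$ is not a tropical point, and it does not compute $I_{kl}$. Take $p=k$, $q\in(k,l)$, $r=l$, $s\in(l,k)$, which are in clockwise order. Then $A_{pr}^t(y_{kl})+A_{qs}^t(y_{kl})=0+1=1$, while every one of $\{pq\},\{rs\},\{qr\},\{ps\}$ shares an endpoint with $\{kl\}$ and so evaluates to $0$; the tropical Pl\"ucker relation \eqref{eq44} gives $1=\max\{0,0\}$, a contradiction. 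Moreover $I_{kl}(G)=\sum_{i\in I, j\in J}w_{ij}$ with $I=[k+1,l]\ni l$ and $J\ni k$, so it counts segments incident to $k$ or $l$ (including $\{kl\}$ itself), which your strict-crossing indicator misses: for $G$ equal to the single segment $\{kl\}$ with weight $1$ you would get $0$ instead of $I_{kl}(G)=1$. The fix is to replace the crossing indicator by the cut function of the partition $I\sqcup J$, i.e.\ $A_{ij}^t(y_{kl})=1$ iff exactly one of $i,j$ lies in $I=[k+1,l]$. Because $I$ and $J$ are cyclic intervals, this function does satisfy \eqref{eq44} in every configuration, and then $\mathbb{I}(G)^t(y_{kl})=\sum w_{ij}A_{ij}^t(y_{kl})=I_{kl}(G)$ exactly, so the rest of your argument goes through. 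As written, though, the key evaluation step fails.
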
 

\begin{proof}
The ``if'' part of the second statement follows directly from Theorem \ref{eq61}. We prove the ``only if" part.

Let $G=(u_{ij})$ be a weighted graph. If $\mathbb{I}(G)$ is not indecomposable, then there exist $1\leq r<s<m<t\leq n+3$ such that $u_{rm}>0$, $u_{st}>0$. Let $G'=(v_{ij})$, $G''=(w_{ij})$ be two new weighted graphs such that
\earray{ll}{v_{ij}=w_{ij}=u_{ij}-1,&\text{ if } \{ij\}=\{rm\}\text{ or }\{st\};\\
                     v_{ij}=u_{ij}+1,~ w_{ij}=u_{ij}&\text{ if } \{ij\}=\{rt\} \text{ or }\{sm\};\\
                     v_{ij}=u_{ij}, ~ w_{ij}=u_{ij}+1&\text{ if } \{ij\}=\{rs\} \text{ or }\{tm\};\\
                     v_{ij}=w_{ij}=u_{ij},     &\text{ otherwise} .
}
Then by the Pl\"{u}cker relation \eqref{eq34}, $\mathbb{I}(G)=\mathbb{I}(G')+\mathbb{I}(G'')$. For any $\{kl\}$, 
\begin{equation}
I_{kl}(G)=\max\{I_{kl}(G'),~I_{kl}(G'')\}.
\end{equation}
If $\mathbb{I}(G')$, $\mathbb{I}(G'')$ are not indecomposable, repeat the above process. The product $\mathbb{I}(G)$ can be uniquely decomposed into a finite sum:
\begin{equation}
\mathbb{I}(G)=\sum_{i\in I}\mathbb{I}(L_i),
\end{equation}
where every $\mathbb{I}(L_i)$ is indecomposable, and 
\begin{equation}
I_{kl}(G)=\max_{i\in I}\{I_{kl}(L_i)\} ,~~~\forall \{kl\}.
\label{int}
\end{equation}

Let $G_1$, $G_2$ be two weighted graphs such that $\mathbb{I}(G_1)\leq\mathbb{I}(G_2)$. The decomposition of ${\Bbb I}(G_2)$ contains all indecomposable functions appearing in the decomposition of ${\Bbb I}(G_1)$.
By \eqref{int}, $I_{kl}(G_1)\leq I_{kl}(G_2)$. The condition $R_{i}(G_1)=R_{i}(G_2)$ follows by the same argument. Therefore the condition (\ref{eq64}) is necessary.

\vskip 3mm 

It is clear that the map \eqref{eq58} is surjective.
It remains to show that it is injective.
Notice that for each $G=(u_{ij})$, we have 
\be \la{6.10.21.28}
u_{ij}=\frac{1}{2}\big(I_{ij}(G)+I_{i-1,j-1}(G)-I_{i,j-1}(G)-I_{i-1,j}(G)\big).
\ee
If ${\Bbb I}(G_1)={\Bbb I}(G_2)$, then by condition (\ref{eq64}), $I_{kl}(G_1)=I_{kl}(G_2),~\forall \{kl\}$. Therefore by \eqref{6.10.21.28}, $G_1=G_2$. The Theorem is proved.
\end{proof}


\section{Cluster ensembles of type $A$}
\label{sec: 4}
\subsection{Definition of cluster ensembles}
\label{sec: 4.1}
For the convenience of the reader, we briefly recall the definition of cluster ensembles from \cite{FG}.
\begin{name}
A seed is a datum ${\bf i}=(I, I_0, \varepsilon, d)$, where $I$ is a finite set,  $I_0$ is a subset of  $I$, $\varepsilon=\{\varepsilon_{ij}\}$ is a $\mathbb{Z}$-valued function on $I\times I$,  and $d=\{d_i\}_{i\in I}$ is a set of positive rational numbers such that $\varepsilon_{ij}d_j^{-1}=-\varepsilon_{ji}d_i^{-1} $.
\end{name} 

Given a seed $\bf{i}$, any element $k\in I-I_0$ provides a new seed $\mu_k({\bf i})={\bf i}'=\{I', I'_0, \varepsilon', d'\}$ such that $I':=I$, $I'_0:=I'_0$, $d':=d$ and 
\eqarray{\varepsilon'_{ij}}{ll}{-\varepsilon_{ij},&\text{ if } k\in \{i,j\}\\
                                              \varepsilon_{ij},&\text{ if } \varepsilon_{ik}\varepsilon_{kj}\leq 0, \quad k\notin \{i, j\}\\
                                              \varepsilon_{ij}+|\varepsilon_{ik}|\cdot \varepsilon_{kj},     &\text{ if } \varepsilon_{ik}\varepsilon_{kj}> 0, \quad k\notin \{i, j\}\\
}
Here $\mu_k$ is called the seed mutation in the direction $k$. This mutation is involutive: $\mu_{k}^2({\bf i})={\bf i}$. Repeating the process in every direction for each new seed got via seed mutations, we obtain an \emph{n}-regular tree such that each of its vertices corresponds to a seed. Here \emph{n} is the cardinality of the set $I-I_0$. 

Now assign to each seed ${\bf i}$ two coordinate systems: $\mc{X}_{{\bf i}}=\{X_i~|~ i\in I\}$ and $\mc{A}_{{\bf i}}=\{A_i~|~i\in I\}$. There is a homomorphism $p$ relating $\mc{X}_{{\bf i}}$ and $\mc{A}_{{\bf i}}$:
\begin{equation} 
p^* X_i=\prod_{j\in I}A_j^{\varepsilon_{ij}}.
\label{mapp}\end{equation}
The transition maps between the coordinate systems assigned to ${\bf i}$ and ${\bf i}'=\mu_k({\bf i})$ are as follows:

\eqarray{\mu_k^*X_i'}{ll}{X_k^{-1}, &\text{ if } i=k\\
                                               X_i(1+X_k^{-{\rm sgn}(\varepsilon_{ij})})^{-\varepsilon_{ij}},&\text{ if } i\neq k,\\
                                              }
\eqarray{\mu_k^*A_i'}{ll}{A_k^{-1}(\prod_{j|\varepsilon_{kj}>0}A_j^{\varepsilon_{kj}}+\prod_{j|\varepsilon_{kj}<0}A_j^{-\varepsilon_{kj}}), &\text{ if } i=k\\
                                               A_i, &\text{ if } i\neq k,\\
                                              }
Clearly the transition maps are subtraction free and thus give rise to a pair of positive spaces. Denote it by $(\mc{X}_{\bf{|i|}}, \mc{A}_{\bf{|i|}})$ and call it a cluster ensemble.\\
 
Given a seed ${\bf i}=(I, I_0, \varepsilon, d)$, let ${\bf i}_0=(I-I_0, \emptyset, \varepsilon,  d)$, where $\varepsilon$ in ${\bf i}_0$ is the restriction of $\varepsilon$ in ${\bf i}$ to $(I-I_0)\times(I- I_0)$. Call ${\bf i}_0$ the reduced seed of $\bf{i}$. Let $(\mc{X}_{{|{\bf i}_0|}}, \mc{A}_{{|{\bf i}_0|}})$ be the cluster ensemble corresponding to the reduced seed ${\bf i}_0$. We have the following commutative diagram:
\be \la{eq78}
\begin{array}{cccc}
{\cal A}_{|{\bf i}|}
&\stackrel{p}{\lra} & {\cal X}_{|{\bf i}|}\\
&& \\
 ~\uparrow i &\searrow k  & ~\downarrow j\\
&& \\
{\cal A}_{|{\bf i}_0|}&\stackrel{p_0}{\lra} &{\cal X}_{|{\bf i}_0|} .
\end{array}
\ee

The maps $p, ~p_0$ are natural maps defined by \eqref{mapp}.

The map $i$ is an injective map such that $i^{*}A_i:= A_i$ if $i\in I-I_0$, otherwise $i^{*}A_i:= 1$.

The map $j$ is a surjective map such that $j^{*}X_i:= X_i$ for all $i\in I-I_0$.

The map $k$ is the composition of $p$ and $j$. Here $k$ is surjective if and only if the sub matrix $\varepsilon_{I-I_0, I}=(\varepsilon_{ij})$ is of full rank, where $(i,j)$ runs through $(I-I_0)\times I$.

\subsection{The map $k$}
In this section, we assume that the map \emph{k} is surjective.  It induces an injective linear map
\be
\tau_{\bf i}:~\Z^n\lra \Z^m, ~~(b_1,\ldots, b_n)\lms (a_1,\ldots, a_m). 
\ee
Here $n=\#(I-I_0)$, $m=\#I$, and $a_j=\sum_{i=1}^n b_i\varepsilon_{ij}$. For each $b=(b_1,\ldots,b_n)$, let $X^b:=\prod_{i=1}^n X_i^{b_i}$. By \eqref{mapp}, we have $k^*(X^b)=A^{\tau_{\bf i}(b)}.$ 

Let $\Q({\cal X}_{|{\bf i}_0|})$ be the field of rational functions on ${\cal X}_{|{\bf i}_0|}$.

\bl \la{5.8.4.20}
Let $f\in \Q({\cal X}_{|{\bf i}_0|})$.  Then $f\in {\Bbb L}_{+}({\cal X}_{|{\bf i}_0|})$ if and only if $k^*(f)\in {\Bbb L}_{+}({\cal A}_{|{\bf i}|})$.
\el

\begin{proof}
For each seed {\bf i}, let ${\cal X}_{{\bf i}_0}=\{X_i~|~i\in I-I_0\}$, ${\cal A}_{{\bf i}}=\{A_j~|~j\in I\}$ be corresponding coordinate systems. Let ${\Z}_{\geq 0}[X_i^{\pm}]$, ${\Z}_{\geq 0}[A_j^{\pm}]$ be the semirings of Laurent polynomials with non-negative integral coefficients. Since $k^*$ is injective and it maps Laurent monomials to Laurent monomials, we have
\be
f\in {\Z}_{\geq 0}[X_{i}^{\pm}]\Longleftrightarrow
k^*(f)\in {\Z}_{\geq 0}[ {A}_{j}^{\pm}].
\ee
By definition, we have ${\Bbb L}_{+}({\cal X}_{|{\bf i}_0|})=\bigcap_{{\bf i}_0}{\Z_{\geq 0}}[{X}_{i}^{\pm}]$, and ${\Bbb L}_{+}({\cal A}_{|{\bf i}|})=\bigcap_{{\bf i}}{\Z_{\geq 0}}[{ A}_{j}^{\pm}]$. The Lemma follows directly.
\end{proof}

\bl \la{6.8.10.06}
Let $f\in {\Bbb L}_{+}({\cal X}_{|{\bf i}_0|})$, $g\in {\Bbb L}_{+}({\cal A}_{|{\bf i}|})$ be such that $k^*(f)\geq g$. Then there exists a unique $g'\in {\Bbb L}_{+}({\cal X}_{|{\bf i}_0|})$ such that $k^*(g')=g.$
\el

\begin{proof}
Fix a seed ${\bf i}$. Let $f=\prod_{b\in \Z^n} c_bX^b$. Then 
\be
k^*(f)=\prod_{b\in \Z^n}c_b A^{\tau_{\bf i}(b)} :=\prod_{a\in \Z^m}c_a'A^a.
\ee
Here the number $c_a'>0$ implies that $a\in \tau_{\bf i}(\Z^n)$.

 Let $g=\prod_{a\in \Z^m}d_a A^a$. Since $k^*(f)\geq g$, then $c_a'\geq d_a$. If $d_a>0$, then $c_a'>0$ and thus $a\in \tau_{\bf i}(\Z^n)$. In other words, there exists a unique
\be
g'=\prod_{b\in\Z^n}d_{\tau_{\bf i}(b)}X^b \in \Z_{\geq 0}[X_j^{\pm}]
\ee
such that $k^*(g')=g$. 

By Lemma \ref{5.8.4.20}, $g'\in {\Bbb L}_{+}({\cal X}_{|{\bf i}_0|})$. The Lemma is proved.
 
\end{proof}

\bl \la{6.8.10.36}
Let $f\in \Q({\cal X}_{|{\bf i}_0|})$. Then $f\in {\bf E}({\cal X}_{|{\bf i}_0|})$ if and only if $k^*(f)\in {\bf E}({\cal A}_{|{\bf i}|})$.  
\el

\begin{proof}
If $f\in {\bf E}({\cal X}_{|{\bf i}_0|})$, then by Lemma \ref{5.8.4.20},  $k^*(f)\in {\Bbb L}_{+}({\cal A}_{|{\bf i}|})$. Here $k^*(f)$ must be indecomposable. Otherwise assume $k^*(f)=g+h$, where $g,h\in {\Bbb L}_{+}({\cal A}_{|{\bf i}|})$ and $g,h\neq 0$. By Lemma \ref{6.8.10.06}, there exist $g',h'\in {\Bbb L}_{+}({\cal X}_{|{\bf i}_0|})$ such that $g=k^*(g'),~h=k^*(h')$. Therefore $f=g'+h'$, which contradicts the assumption that $f$ is indecomposable.
The other direction follows similarly. The Lemma is proved.
\end{proof}

\bl \la{6.9.3.5}
For each $f\in {\bf P}({\cal X}_{|{\bf i}_0|})$, we have $k^*(f)\in {\bf P}({\cal A}_{|{\bf i}|})$. Moreover, it preserves the partial order structure:
\be \la{eq79}
\forall f,g\in {\bf P}({\cal X}_{|{\bf i}_0|}),~f\leq g \Longleftrightarrow k^*(f)\leq k^*(g).
\ee
\el
\begin{proof}
The first part follows from Lemma \ref{6.8.10.36}.
The second part follows from Lemma \ref{5.8.4.20}.
\end{proof}

\vskip 3mm

Let {\rm T} be a split algebraic torus. We define the group of characters
\be
X({\rm T}):={\rm Hom}({\rm T},{\Bbb G}_m).
\ee
In particular, each seed {\bf i} gives rise to a pair of tori
\be
{\rm T}_{{\bf i},{\cal A}}:={\rm Spec} (\C[A_j^{\pm}]),~{\rm T}_{{{\bf i}_0},{\cal X}}:={\rm Spec}(\C[X_i^{\pm}]); ~ j\in I,~i\in I-I_0.
\ee
Since $k$ is a surjective morphism from ${\rm T}_{{\bf i},{\cal A}}$ to ${\rm T}_{{\bf i}_0,{\cal X}}$, the group $X({\rm T}_{{\bf i}_0,{\cal X}})$ can be viewed as a sub lattice of $X({\rm T}_{{\bf i},{\cal A}})$. Let ${\rm Q}_{\bf i}:=X({\rm T}_{{\bf i},{\cal A}})/X({\rm T}_{{\bf i}_0, {\cal X}})$ be the corresponding quotient group. 

\vskip 3mm

Let ${\Bbb L}({\cal A}_{|{\bf i}|})$ be the coordinate ring of ${\cal A}_{|{\bf i}|}$. 
Each $f\in {\Bbb L}({\cal A}_{|{\bf i}|})$ can be uniquely expanded as
\be \la{6.8.3.58}
f= \sum_{[a]\in {\rm Q}_{\bf i} } f_{[a]}^{\bf i}
\ee
where $f_{[a]}^{\bf i}$ is a Laurent polynomial consisting of characters belong to the coset $[a]$. Namely, if $a\in X({\rm T}_{{\bf i}, {\cal A}})$ is a representative of $[a]\in {\rm Q}_{\bf i}$, then $f_{[a]}^{\bf i}=A^{a}\cdot k^*(g_a)$, where $g_a\in \Q({\cal X}_{|{\bf i}_0|})$.

\bl \la{6.8.4.04} For each pair $\overline{\bf i}, {\bf i}\in |{\bf i}|$, there is a  canonical isomorphism $\eta: {\rm Q}_{\overline{\bf i}}\stackrel{\sim}{\lra} {\rm Q}_{\bf i}$ such that
\be
f_{[a]}^{\overline{\bf i}}=f_{\eta([a])}^{\bf i},~~~\forall[a]\in {\rm Q}_{\overline{\bf i}},~~\forall f\in {\Bbb L}({\cal A}_{|\bf i|}).
\ee
\el

\begin{proof}
First assume $\overline{\bf i}=\mu_k({\bf i})$. By definition, we have 
\begin{align} \la{9.11.11.0}
\overline{A}_k&=\big(A_k^{-1}\prod_{j|\varepsilon_{kj}<0 }A_{j}^{-\varepsilon_{kj}}\big)\cdot k^*(1+X_k) \nonumber \\
\overline{A}_j&=A_j,~~\forall j\neq k.
\end{align}
We consider the following bijective map
$$
\eta: X({\rm T}_{\overline{\bf i}, {\cal A}})\lra X({\rm T}_{{\bf i}, {\cal A}}),~~~(a_1,\ldots, a_m)\lms (b_1,\ldots, b_m),
$$
such that
\eqarray{b_j}{ll}{-a_k,&\text{ if } j=k,\\
                                a_j-\varepsilon_{kj}\cdot a_k   ,&\text{ if } \varepsilon_{kj}< 0,\\
    a_j,  &\text{ otherwise.} \\
}
We pick a representative $a\in X({\rm T}_{\overline{\bf i},{\cal A}})$ for each $[a]\in {\rm Q}_{\overline{\bf i}}$. Then by \eqref{9.11.11.0}, we have
$$
f_{[a]}^{\overline{\bf i}}=\overline{A}^a\cdot k^*(g_a)=A^{\eta(a)}\cdot k^*\big((1+X_k)^{a_k}\cdot g_a\big),
$$
where $(1+X_k)^{a_k}\cdot g_a \in \Q({\cal X}_{|{\bf i}_0|})$. 

Notice that $\eta$ maps the sub lattice $X({\rm T}_{\overline{\bf i}_0,{\cal X}})$ onto $X({\rm T}_{{\bf i}_0, {\cal X}})$. It descends to a bijective map $\eta: {\rm Q}_{\overline{\bf i}}\lra {\rm Q}_{\bf i}$. We consider the expansion of $f$ under the seed ${\bf i}$:
$$
f=\sum_{\eta([a])\in {\rm Q}_{\bf i}}f_{\eta([a])}^{\bf i}=\sum _{\eta([a])\in {\rm Q}_{\bf i}} A^{\eta(a)}\cdot k^*\big((1+X_k)^{a_k}\cdot g_a\big).
$$
Since this expansion is unique, we have
$$
f_{\eta([a])}^{\bf i}=A^{\eta(a)}\cdot k^*\big((1+X_k)^{a_k}\cdot g_a\big)= f_{[a]}^{\overline{\bf i}}.
$$
The Lemma follows. For arbitrary seed $\overline{\bf i}$, it is a composition of mutations. Thus the Lemma is proved.
\end{proof}

\bl \la{6.8.16.58}
Let $f$ be as in (\ref{6.8.3.58}). If $f\in {\Bbb L}_{+}({\cal A}_{|{\bf i}|})$, then
\be
f_{[a]}^{\bf i}\in {\Bbb L}_{+}({\cal A}_{|{\bf i}|}), ~~\forall [a]\in {\rm Q}_{\bf i}.
\ee
\el

\begin{proof}
It follows immediately from Lemma \ref{6.8.4.04}.
\end{proof}

\bl \la{6.8.17.2}
If ${\bf E}({\cal A}_{|{\bf i}|}) $ is a basis of ${\Bbb L}({\cal A}_{|{\bf i}|})$, then ${\bf E}({\cal X}_{|{\bf i}_0|})$ is a basis of ${\Bbb L}({\cal X}_{|{\bf i}_0|}).$ 
\el
\begin{proof}

If $f\in {\bf E}({\cal A}_{|{\bf i}|})$, then there exists a unique $[a]\in {\rm Q}_{\bf i}$ such that $f=f_{[a]}^{\bf i}$, otherwise it is decomposable due to Lemma \ref{6.8.16.58}.

Let $g\in {\Bbb L}({\cal X}_{|{\bf i}_0|})$. Let $h=k^*(g)\in {\Bbb L}({\cal A}_{|{\bf i}|})$. Then by definition $h=h_{[0]}^{\bf i}$. 
Since ${\bf E}({\cal A}_{|{\bf i}|})$ is a basis of ${\Bbb L}({\cal A}_{|{\bf i}|})$, one has a unique decomposition
\be
h=\sum_i c_i h_i, ~~~h_i\in {\bf E}({\cal A}_{|{\bf i}|}).
\ee
Clearly here if $c_i\neq 0$, then $h_i=h_{i,[0]}^{\bf i}.$ Namely $h_i\in k^*(\Q({\cal X}_{|{\bf i}_0|}))$. Let $h_i=k^*(g_i)$. By Lemma \ref{6.8.10.36}, $g_i\in {\bf E}({\cal X}_{|{\bf i}_0|})$.  We thus get a unique decomposition: 
\be
g=\sum_i c_ig_i,~~~g_i\in {\bf E}({\cal X}_{|{\bf i}_0|}).
\ee 
The Lemma is proved.
\end{proof}

\subsection{The set of ${\cal A}$-laminations on a convex polygon}
Let us recall the definition of ${\cal A}$-laminations from \cite{FG1}. Most of the results of this section are from \cite{FG3} Section 3.
\bd [{\rm [\emph{loc.cit,} Definition 3.2]}]
\la{6.7.1.37}
An ${\cal A}$-lamination on a convex polygon is a collection of edges and mutually non-intersecting diagonals of the polygon with ${\Bbb A}$-valued weights, subjecting to the following conditions:
\begin{enumerate}
\item The weights of the diagonals are non-negative.
\item The sum of the weights of the diagonals and edges incident to a given vertex is zero.
\end{enumerate} 
\ed
Denote by ${\cal A}_L(n,{\Bbb A})$ the set of ${\cal A}$-laminations on a convex (\emph{n}+3)-gon. Recall that ${\Bbb A}$ can be $\Z, \Q,\R$. Clearly ${\cal A}_L(n,{\Bbb A})$ is a subset of weighted graphs. Let $G\in {\cal A}_L(n, {\Bbb A})$. Set
\be \la{6.9.4.04}
a_{kl}:=\frac{1}{2}I_{kl}(G).
\ee
Notice that $R_i(G)=0,~\forall i\in[1,n+3]$. By (\ref{eq63}), we have $a_{kl}=-\Gamma_{k+1,l}(G)\in {\Bbb A}$. It is easy to show that
\begin{align}
\la{6.7.2.28}
a_{ik}+a_{jl}&={\rm max}\{a_{ij}+a_{kl},~a_{il}+a_{jk}\},&&\text{if $i,j,k,l$ seated clockwise},\nonumber\\
a_{ij}&=0,&&\text{if $i,j$ are adjacent.}
\end{align}
Let \emph{T} be a triangulation of the (\emph{n}+3)-gon. 
\bl \la{6.7.4.15}
There is a bijection
\be
\phi_T: {\cal A}_L(n,{\Bbb A})\stackrel{\sim}{\lra}{\Bbb A}^{\{{\rm diagonals~of}~T\}},~~~G\lms \{a_{ij}(G)\}, ~\{ij\}\in T.
\ee
\el
\begin{proof}
We prove the Lemma by constructing the inverse map of $\phi_T$.

Let $\{a_{ij}\}\in {\Bbb A}^{\{\text{diagonals  of }T\}}$.  It can be uniquely extended to $\{a_{pq}\},~p,q\in [1,n+3]$ such that they satisfy (\ref{6.7.2.28}) and that $a_{pp}=0,~\forall p\in [1,n+3]$. Let 
\be \la{6.7.4.12}
u_{pq}=a_{q-1,p-1}+a_{pq}-a_{p,q-1}-a_{p-1,q}
\ee
We prove that $G=(u_{pq})$ is the pre-image of $\{a_{ij}\}$. We first prove that $G$ is an ${\cal A}$-lamination.

By the tropical Pl\"{u}cker relation, the weight $u_{pq}\geq 0$ for each diagonal $\{pq\}$. For each diagonal $\{pq\}$ such that $u_{pq}>0$, let $I=[p+1,q-1],~J=[q+1,n+3]\cup[1,p-1]$. The sum of weights of diagonals intersecting $\{pq\}$ is
\begin{align}
\sum_{i\in I,~j\in J}u_{ij}&=\sum_{j\in J}\big(\sum_{i\in I}(a_{ij}-a_{i-1,j})+\sum_{i\in I}(a_{i-1,j-1}-a_{i,j-1})\big)\nonumber\\
&=\sum_{j\in J}\big(a_{q-1,j}-a_{pj}+a_{p,j-1}-a_{q-1,j-1}\big)\nonumber\\
&=\sum_{j\in J}(a_{q-1,j}-a_{q-1,j-1})+\sum_{j\in J}(a_{p,j-1}-a_{p,j})\nonumber\\
&=a_{q-1,p-1}+a_{p,q}-a_{q-1,q}-a_{p,p-1}.
\end{align}
The tropical Pl\"{u}cker relation tells us that 
\be
\min\{u_{pq}, \sum_{i\in I, ~j\in J}u_{ij}\}=\min\{a_{q-1,p-1}+a_{p,q}-a_{q-1,q}-a_{p,p-1},~a_{q-1,p-1}+a_{pq}-a_{p,q-1}-a_{p-1,q}\}=0.
\ee
Since $u_{pq}>0$, one must have
\be
\sum_{i\in I, ~j\in J}u_{ij}=0.
\ee 
Each of $u_{ij}$ is non-negative. Therefore $u_{ij}=0,~\forall i\in I, j\in J$. Therefore ${G}$ is a collection of edges and mutually non-intersecting diagonals of the polygon.

Let $I=[k+1,l], J=[l+1,n+3]\cup[1,k]$. Similarly, we  have
\begin{align} \la{6.10.9.53}
a_{kl}(G)=\frac{1}{2}I_{kl}(G)=\frac{1}{2}\sum_{i\in I,~j\in J}u_{ij}=\frac{1}{2}(a_{lk}-a_{ll}+a_{kl}-a_{kk})=a_{kl}.
\end{align}
In particular $R_k(G)=I_{k,k+1}(G)=2a_{k,k+1}=0$,~$\forall k\in [1,n+3]$. Therefore $G$ is an ${\cal A}$-lamination. The map $\phi_T$ takes $G$ to $\{a_{ij}\}$. Hence $\phi_T$ is surjective. 

It is injective because every $G=(u_{pq})$ is uniquely determined by (\ref{6.7.4.12}). 
\end{proof}
By Lemma \ref{6.7.4.15} and the relations (\ref{6.7.2.28}), the following proposition is clear.
\bp{\rm (\cite{FG3})} \la{6.9.1.53}
There is a canonical isomorphism of sets
\be 
{\cal A}_{A_n}({\Bbb A}^t)={\cal A}_L(n,{\Bbb A})
\ee
such that 
\be
A_{ij}^t(l)=a_{ij}(l),~~~\forall ~{\rm diagonals}~ \{ij\}.
\ee
\ep

\subsection{Proof of Theorem \ref{mthm}}
\label{sec: 4.2}
Given a Cartan matrix of type $A_n$, we obtain a skewsymmetric matrix $\varepsilon$ by killing the 2's on the diagonal and changing signs under the diagonal. Let ${\bf i}=\{I, \emptyset, \varepsilon, d\}$ be such that $|I|=n$ and every $d_i\in d$ is 1. Denote by $\mc{X}_{A_n}$ the cluster $\mc{X}$-variety corresponding to this seed. Recall the cluster $\mc{A}$-variety $\widetilde{\mc{A}}_{A_n}$ from Definition \ref{eq35}. As shown in Section \ref{sec: 4.1}, there is a canonical surjective map $k: \widetilde{\mc{A}}_{A_n} \goto \mc{X}_{A_n}$. The space ${\cal X}_{A_n}$ is a partial completion of the moduli space ${\cal M}_{0,n+3}$. The next Lemma follows from \cite{FG3}, Section 3.
\bl \la{6.9.2.06}
For each weighted graph G,  ${\Bbb I}(G)\in k^*(\Q({\cal X}_{A_n}))$ if and only if 
\be
R_{i}(G)=0,~\forall i\in [1,n+3].
\ee
\el
\bt
There is a canonical isomorphism: ${\Bbb I}_{\cal A}: {\cal A}_{A_n}(\Z^t)\stackrel{\sim}{\lra} {\bf E}({\cal X}_{A_n})$. The set ${\bf E}({\cal X}_{A_n})$ is a basis of ${\Bbb L}({\cal X}_{A_n})$.
\et
\begin{proof}

The set ${\bf E}(\widetilde{\cal A}_{A_n})$ is a canonical basis of ${\Bbb L}(\widetilde{\cal A}_{A_n})$ ([FZ1, Section 4], \cite{C}). Thus the second part of our theorem follows from Lemma \ref{6.8.17.2}.

Moreover ${\bf E}(\widetilde{A}_{A_n})$ can be parametrized by the set of weighted graphs with mutually non-intersecting diagonals. 
By Lemma \ref{6.8.10.36} and Lemma \ref{6.9.2.06}, the set ${\bf E}({\cal X}_{A_n})$ is isomorphic to ${\cal A}_L(n,\Z)$. By Proposition \ref{6.9.1.53}, we construct a canonical isomorphism between ${\bf E}({\cal X}_{A_n})$ and ${\cal A}_{A_n}(\Z^t)$. The first part is proved.
\end{proof}

Now we prove Theorem \ref{mthm}.
\begin{proof}
 By Proposition \ref{6.9.1.53}, we  may replace ${\cal A}_{A_n}(\Z^t)$ by ${\cal A}_L(n,\Z)$. For each
 \be
 f=\prod_{i=1}^m {\Bbb I}_{\cal A}(l_i)=\sum_{l\in {\cal A}_L(n,\Z)}c(f;l){\Bbb I}_{\cal A}(l),
 \ee
 we have
 \be
 l\in S_f \Longleftrightarrow c(f;l)>0 \Longleftrightarrow f\geq {\Bbb I}_{\cal A}(l)\Longleftrightarrow k^*(f)\geq k^*\big({\Bbb I}_{\cal A}(l)\big).
 \ee
 The first two equivalences are by definition. The third one is by Lemma \ref{6.9.3.5}. By the construction of ${\Bbb I}_{\cal A}$, we have
 \be
 k^*({\Bbb I}_{\cal A}(l))={\Bbb I}(l),~~k^*(f)={\Bbb I}(\sum_{i=1}^m l_i).
  \ee
  Here ${\sum_{i=1}^m l_i}$ is the sum of the laminations $l_i$ as matrices. Notice that $l,l_1,\ldots, l_m$ are ${\cal A}$-laminations. Then
 $$
 R_p(l)=0,~~R_p(\sum_{i=1}^m l_i)=\sum_{1=1}^m R_p(l_i)=0,~~\forall p.
 $$
 The first condition of (\ref{eq64}) holds automatically. By Theorem \ref{thm2.4}, we get
 \be
 {\Bbb I}(l)\leq {\Bbb I}(\sum_{i=1}^m l_i) \Longleftrightarrow I_{jk}(l)\leq \sum_{i=1}^m I_{jk}(l_i),~\forall \{jk\}.
 \ee
By (\ref{6.9.4.04}) and Proposition \ref{6.9.1.53}, we have $A_{jk}^t(l)=\frac{1}{2}I_{jk}(l)$. Therefore
\be
 l\in S_f\Longleftrightarrow{\Bbb I}(l)\leq {\Bbb I}(\sum_{i=1}^m l_i) \Longleftrightarrow A_{jk}^t(l)\leq \sum_{i=1}^m A_{jk}^t(l_i),~\forall \{jk\}.
\ee
Then the support is
\be
S_f=\{l\in {\cal A}_{A_n}(\Z^t)~|~ A_{jk}^t(l)\leq \sum_{i=1}^m A_{jk}^t(l_i), ~\forall \{jk\}\}.
\ee
The first part of Theorem \ref{mthm} is proved. The second part follows from Theorem \ref{6.3.12.44}.
 \end{proof}

 A Stasheff polytope ${\cal F}_{c}^{\emptyset}$ is called regular if the defining set $c$ is a set of integers. 
 Denote by ${\rm St}(\mc{A}_{A_n})$ the set of regular Stasheff polytopes in  $\mc{A}_{A_n}(\mathbb{Z}^t)$. Define the set
 \be
 {\bf P}_{*}({\cal X}_{A_n}):=\{f\in {\Bbb L}_{+}({\cal X}_{A_n})~|~k^*(f)\in {\bf P}(\widetilde{\cal A}_{A_n})\}.
 \ee
 \begin{theorem}
 \label{thm: 4.2}
  The following map is a bijection:
 \be
 {\bf P}_{*}(\mc{X}_{A_n}) \lra {\rm St}(\mc{A}_{A_n}), ~~~ f \lms S_f.
 \ee
 Furthermore $f_1\leq f_2$ if and only if $S_{f_1}\subseteq S_{f_2}$.
 \end{theorem}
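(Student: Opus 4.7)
The plan is to set up a three-way correspondence that will translate the claimed bijection into a bijection between two combinatorial sets. On the source side, by Theorem~\ref{thm2.4} together with Lemma~\ref{6.9.2.06}, elements $f\in{\bf P}_{*}(\mc{X}_{A_n})$ are parametrized (via $k^{*}(f)=\mathbb{I}(G)$) by weighted graphs $G$ with $R_p(G)=0$ at every vertex $p$. On the target side, by Theorem~\ref{eq55}, elements of ${\rm St}(\mc{A}_{A_n})$ are parametrized by integer tuples $(c_{\alpha})_{\alpha\in\Phi_{\geq -1}}$ satisfying the tropical Pl\"ucker-type inequalities of that theorem. Using formula~(\ref{6.10.9.53}), the map $f\mapsto S_f$ becomes the passage $G\longmapsto (c_{\alpha})$ with $c_{\alpha}=\tfrac{1}{2}I_{\alpha}(G)$ on each diagonal $\alpha$, and the theorem reduces to showing that this passage is an order-preserving bijection.

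For well-definedness of $f\mapsto S_f$, given $f\in{\bf P}_{*}$, I would decompose the associated graph as $G=\sum_i l_i$ with $l_i\in {\cal A}_L(n,\Z)$, so that $f=\prod_i \mathbb{I}_{\mc{A}}(l_i)$ and Theorem~\ref{mthm} applies, giving $S_f\in{\rm St}(\mc{A}_{A_n})$ with $c_{\alpha}=\sum_i A^{t}_{\alpha}(l_i)=\tfrac{1}{2}I_{\alpha}(G)$. For injectivity, if $S_{f_1}=S_{f_2}$, then each cluster $T$ provides a vertex of $S_{f_i}$ at which $A^{t}_{\alpha}=c^{(i)}_{\alpha}$ for every $\alpha\in T$, forcing $c^{(1)}=c^{(2)}$ componentwise and hence $I_{\alpha}(G_1)=I_{\alpha}(G_2)$ for all diagonals; combined with $R_p(G_i)=0$, Theorem~\ref{thm2.4} gives $\mathbb{I}(G_1)\leq\mathbb{I}(G_2)$ and vice versa, whence $G_1=G_2$ and $f_1=f_2$. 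For surjectivity, given $(c_{\alpha})$ satisfying the Stasheff inequalities, I set $I_{\alpha}=2c_{\alpha}$ on diagonals, $I_{\alpha}=0$ on edges, and $I_{pp}=0$, and define $G=(u_{ij})$ by formula~(\ref{6.10.21.28}). The Stasheff inequalities translate directly into $u_{ij}\geq 0$ on every diagonal $\{ij\}$ (the edge-boundary cases reducing to weaker sub-inequalities that still follow from Theorem~\ref{eq55}), while specializing (\ref{eq63}) to $k=p-1,\,l=p$ yields $R_p(G)=I_{p-1,p}(G)=0$. Hence $\mathbb{I}(G)\in k^{*}(\Q(\mc{X}_{A_n}))$ by Lemma~\ref{6.9.2.06}, and $f:=(k^{*})^{-1}(\mathbb{I}(G))\in\mathbb{L}_{+}(\mc{X}_{A_n})$ by Lemma~\ref{5.8.4.20}, giving $f\in{\bf P}_{*}(\mc{X}_{A_n})$ with $S_f=\mc{F}^{\emptyset}_c$ by construction.

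For the partial-order statement, Lemma~\ref{6.9.3.5} turns $f_1\leq f_2$ into $\mathbb{I}(G_1)\leq \mathbb{I}(G_2)$, which by Theorem~\ref{thm2.4} (with the $R_p$-equality automatic on ${\bf P}_{*}$) becomes $I_{kl}(G_1)\leq I_{kl}(G_2)$ for every diagonal, equivalently $c^{(1)}_{\alpha}\leq c^{(2)}_{\alpha}$ for all $\alpha$. One direction of $c^{(1)}\leq c^{(2)}\iff S_{f_1}\subseteq S_{f_2}$ is immediate from the defining inequalities, and the reverse holds because any cluster vertex of $S_{f_1}$ lying in $S_{f_2}$ witnesses $c^{(1)}_{\alpha}\leq c^{(2)}_{\alpha}$ for all $\alpha$ in that cluster, and every diagonal belongs to some cluster.

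The main obstacle I anticipate is the decomposition step inside well-definedness: writing an arbitrary weighted graph $G$ with $R_p(G)=0$ and non-negative diagonal weights as a sum of ${\cal A}$-laminations, so that $f$ is genuinely realized as a product of canonical-basis elements of $\mc{X}_{A_n}$. I would approach this by induction on the number of intersecting pairs of diagonals in $G$, resolving one crossing at a time via the tropical Pl\"ucker relation (as in the decomposition argument used in the proof of Theorem~\ref{thm2.4}), and using Lemma~\ref{6.7.4.15} to fix the edge weights of each ${\cal A}$-lamination summand.
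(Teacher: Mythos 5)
Your overall architecture matches the paper's: both sides are translated into combinatorics (weighted graphs $G$ with $R_p(G)=0$ via Theorem~\ref{thm2.4} and Lemma~\ref{6.9.2.06}; integer tuples satisfying \eqref{6.10.9.36} via Theorem~\ref{eq55}), surjectivity is obtained from formula \eqref{6.10.21.28}, and the order statement from Lemma~\ref{6.9.3.5} plus Theorem~\ref{thm2.4}. Those parts are fine. The gap is exactly where you flagged it, and your proposed repair does not close it. You want to write the graph $G$ of $f\in{\bf P}_{*}(\mc{X}_{A_n})$ as a sum of ${\cal A}$-laminations so that $f=\prod_i\mathbb{I}_{\mc A}(l_i)$ and Theorem~\ref{mthm} applies. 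Such a decomposition does not exist in general, because ${\bf P}_{*}(\mc{X}_{A_n})$ is strictly larger than the set of products of canonical basis elements. Take $n=1$: the identity $\sum_p(-1)^pR_p(G)=\sum_{\{ij\}}w_{ij}\bigl((-1)^i+(-1)^j\bigr)$ shows that on the square any graph with $R_p\equiv 0$ has $w_{13}=w_{24}$, while an ${\cal A}$-lamination contains at most one of these two crossing diagonals and hence neither; so every lamination is a pure edge graph and $\mathbb{I}(l)$ is a Laurent monomial in edge variables. Yet $G$ with $w_{13}=w_{24}=1$, $w_{12}=w_{34}=-1$ has $R_p(G)=0$ and $\mathbb{I}(G)=A_{13}A_{24}A_{12}^{-1}A_{34}^{-1}=k^{*}(1+X)$, which lies in ${\bf P}_{*}(\mc{X}_{A_1})$ but is not a product of canonical basis elements (analogous examples exist whenever $n+3$ is even). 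Your suggested fix --- resolving crossings via the tropical Pl\"ucker relation as in the proof of Theorem~\ref{thm2.4} --- produces a decomposition of $\mathbb{I}(G)$ as a \emph{sum} of indecomposables, not a factorization of $f$ as a product, so it cannot supply the hypothesis of Theorem~\ref{mthm}.

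The correct route, which is the paper's, bypasses products entirely: for any $f\in{\bf P}_{*}(\mc{X}_{A_n})$ one has $f\in\mathbb{L}_{+}(\mc{X}_{A_n})$, hence non-negative structure coefficients, hence $l\in S_f\iff f\geq\mathbb{I}_{\mc A}(l)\iff \mathbb{I}(G)=k^{*}(f)\geq\mathbb{I}(l)$ by Lemma~\ref{6.9.3.5}. Theorem~\ref{thm2.4} (the $R_p$-condition being automatic) converts this to $I_{jk}(l)\leq I_{jk}(G)$ for all diagonals, i.e.\ $A^t_{jk}(l)\leq\tfrac12 I_{jk}(G)$, so $S_f=\mc{F}^{\emptyset}_c$ with $c_{jk}=\tfrac12 I_{jk}(G)$; these $c_{jk}$ satisfy \eqref{6.10.9.36} because the diagonal weights of $G$ are non-negative, and Theorem~\ref{eq55} then gives $S_f\in{\rm St}(\mc{A}_{A_n})$. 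With the well-definedness step replaced by this argument, the remainder of your proof goes through.
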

\begin{proof} 
For each $f\in {\bf P}_{*}({\cal X}_{A_n})$, by the above discussion, we have
$k^*(f)={\Bbb I}(G)$ such that
\be
S_f=\{l\in {\cal A}_{A_n}(\Z^t)~|~A_{jk}^t(l)\leq c_{jk}(G),~\forall \{jk\}\},
\ee
where
\be
c_{jk}(G):=\frac{1}{2}I_{jk}(G)=-\Gamma_{k+1,l}(G)\in \Z, ~~\forall \{jk\}.
\ee
Furthermore, the set $\{c_{jk}(G)\}$ satisfies (\ref{6.10.9.36}). By Theorem \ref{eq55}, $S_f\in {\rm St}({\cal A}_{A_n})$.

For $f_1={\Bbb I}(G_1),f_2={\Bbb I}(G_2)\in {\bf P}_{*}(\mc{X}_{A_n})$, following the same argument of the last proof, 
\be \la{6.11.1.1}
f_1\leq f_2 \Longleftrightarrow I_{jk}(G_1)\leq I_{jk}(G_2), ~\forall \{jk\}.
\ee
The right hand side is equivalent to $S_{f_1}\subseteq S_{f_2}$. The second part is proved.

Therefore if $S_{f_1}=S_{f_2}$, then $f_1=f_2$, so the map \eqref{6.11.1.1} is injective. 

Given a set $c=\{c_{ij}\}$ of integers satisfying (\ref{6.10.9.36}), let $u_{ij}=c_{ij}+c_{i-1,j-1}-c_{i, j-1}-c_{i-1, j}$, where $c_{ij}=0$ if $i, j$ are adjacent or $i=j$. Let $G=(u_{ij})$ be the corresponding weighted graph. Let $I=[k+1,l]$ and let $J=[l+1,n+3]\cup[1,k]$. By (\ref{6.10.9.53}),
\begin{align}
c_{kl}(G)=\frac{1}{2}I_{kl}(G)=c_{kl},~\forall \{kl\}.
\end{align}
In particular $R_i(G)=I_{i-1,i}(G)=0$ for all \emph{i}. By Lemma \ref{6.9.2.06},  there is $f\in {\bf P}_{*}(\mc{X}_{A_n})$ such that $k^*(f)=\mathbb{I}(G)$, and $S_f={\cal F}_{c}^{\emptyset}$. Thus the map is surjective.
\end{proof}

\subsection{Further discussion and conjecture}
\label{not}
\label{sec: 4.3}
The Minkowski sum gives rise to a new family of convex polytopes in tropical positive spaces.
\bd
\la{eq88}
 A subset $S$ of a tropical positive space $\mc{A}(\mathbb{A}^t)$ is called a Minkowski polytope if there exist finitely many points $l_1,...l_m\in \mc{A}(\mathbb{A}^t)$ such that $S$ is the Minkowski sum of these $l_i$'s:
\be
\la{eq89}
S=\{x~|~ F^t(x)\leq \sum_{i=1}^mF^t(l_i) \text{ for all F }\in \exset{A}\}.
\ee
Given any set $T \subset \exset{A}$, the $T$-face of $S$ is the following set
\be
\la{eq90}
\mc{F}^T=S\bigcap \{x~|~ F^t(x)=\sum_{i\in I}F^t(l_i) \text{ for all F }\in T\}.
\ee
\ed

${\bf Remark}.$ Not all single point sets in $\mc{A}(\mathbb{A}^t)$ are convex. For example, it follows from \cite{SZ} that for rank 2 cluster $\mc{A}$-varieties of affine type, point sets corresponding to imaginary roots are not convex. Therefore they are not Minkowski polytopes. In this case, the supports of products of indecomposable functions are not necessarily convex either. For example, let ${\delta}$ be the imaginary root. For each $n\geq 1$, there is an indecomposable function $z_n$ corresponding to $n\delta$. By [\emph{loc.cit.}, Proposition 5.4], for all $p\geq n\geq 1$,
\eqarray{z_pz_n}{ll}{z_{p-n}+z_{p+n},~~& {\rm if~}p>n;\\
2+z_{2n},~~&{\rm if~}p=n.
}
The support of $z_pz_n$ is clearly not convex.

\vskip2mm

Assuming the Duality Conjecture, a generalization of Theorem \ref{mthm} is as follows.
\bcon \la{eq91} Let $f,g\in \mathbb{L}_{+}(\mc{X})$ be two universally positive Laurent polynomials on a cluster $\mc{X}$-variety. If both supports $S_f$ and $S_g$ are Minkowski polytopes in $\tropsp{A^\vee}$, then the support $S_{fg}$ is the Minkowski sum of $S_f$ and $S_g$. 
\econ


\vskip 2mm
Department of Mathematics, Yale University, New Haven, CT 06511, United States

 {\it E-mail address}: \url{linhui.shen@yale.edu}
 

\begin{thebibliography}{BL}
\bibitem[BFZ]{BFZ} Berenstein A., Fomin S., Zelevinsky A.: Cluster algebras. III. Upper bounds and double Bruhat cells. \emph{Duke Math. J.} 126 (2005), no. 1, 1-52. 
\bibitem[C]{C} Cerulli Irrelli G.: Positivity in skew-symmetric cluster algebras of finite type, arXiv math.RA/1102.3050.
\bibitem[CFZ]{CFZ} Chapoton F., Fomin S., Zelevinsky A.: Polytopal Realizations of generalized associahedra, \emph{Canad. Math. Bull.} 45(2002), 537-566. 
\bibitem[FG]{FG}  Fock V., Goncharov A.: Cluster ensembles, quantization and the dilogarithm, \emph{Ann. Sci. \'{E}c. Norm.  Sup\'{e}r.} (4) 42 (2009), no. 6, 865-930.
\bibitem[FG1]{FG1} Fock V., Goncharov A.: Dual Teichm\"{u}ller and and lamination spaces, in \emph{Handbook of Teichm\"{u}ller theory. Vol.I,} 647-684, IRMA Lect. Math. Theor. Phys., 11, \emph{Eur. Math. Soc., Z\"{u}rich,} 2007. 
\bibitem[FG2]{FG2} Fock V., Goncharov A.: Moduli spaces of local systems and higher Teichm\"{u}ller theory, \emph{Publ. Math. Inst. Hautes \'{E}tudes Sci.} 103(2006), 1-211. 
\bibitem[FG3]{FG3} Fock V., Goncharov A.: Cluster $\mathcal{X}$-varieties at infinity, arXiv math.AG/1104.0407.
\bibitem[FZ]{FZ} Fomin S., Zelevinsky A.: Cluster algebras I: Foundations, \emph{J. Amer. Math. Soc.} 15(2002), no.2, 497-529. 
\bibitem[FZ1]{FZ1} Fomin S., Zelevinsky A.: Cluster algebras: notes for the CDM-03 conference, in CDM 2003. \emph{Current developments in mathematics,} 2003, 1-34, Int. Press, Somerville, MA, 2003. 
\bibitem[FZ2]{FZ2} Fomin S., Zelevinsky A.: Y-systems and generalized associahedra, \emph{Ann. of Math.} 158(2003), 997-1018. 
\bibitem[SZ]{SZ} Sherman P., Zelevinsky A.: Positivity and canonical bases in rank 2 cluster algebras of finite type and affine types. \emph{Mosc. Math. J.} 4(2004), no.4, 947-974, 982. 
\end{thebibliography}
\end{document}